\documentclass[11pt,letterpaper,reqno]{amsart}
\usepackage{tikz}
\usetikzlibrary{positioning, shapes.geometric, arrows.meta, calc, backgrounds, fit}
\usepackage{amssymb}
\usepackage{amsmath}
\usepackage{amsthm}
\usepackage{amsfonts}
\usepackage{bbm}
\usepackage{enumitem} 
\usepackage{pgfplots}
\pgfplotsset{compat=1.18} 
\usepackage{booktabs}
\usepackage{graphicx}
\usepackage[T1]{fontenc}
\usepackage{hyperref}
\usepackage{bookmark}
\usepackage{doi}
\usepackage{float} 
\usepackage{comment} 
\hypersetup{pdfstartview={FitH}}

\newtheorem{theorem}{Theorem}[section]
\newtheorem{lemma}[theorem]{Lemma}
\newtheorem{proposition}[theorem]{Proposition}

\newtheorem{problem}[theorem]{Problem}

\theoremstyle{definition}

\newtheorem{definition}[theorem]{Definition}

\let\originalleft\left
\let\originalright\right
\renewcommand{\left}{\mathopen{}\mathclose\bgroup\originalleft}
\renewcommand{\right}{\aftergroup\egroup\originalright}

\begin{document}

\title{Largest Sidon subsets in weak Sidon sets}

\author[J.~Ma]{Jie Ma}
\author[Q.~Tang]{Quanyu Tang}

\address{School of Mathematical Sciences, University of Science and Technology of China, Hefei, Anhui 230026, and Yau Mathematical Sciences Center, Tsinghua University, Beijing 100084, China}
\email{jiema@ustc.edu.cn}
\address{School of Mathematics and Statistics, Xi'an Jiaotong University, Xi'an 710049, P. R. China}
\email{tang\_quanyu@163.com}

\subjclass[2020]{Primary 11B30; Secondary 11B25, 11B34, 05C65}


\keywords{sidon set, weak Sidon set, $(4,5)$-set, arithmetic progression, transversal number}

\begin{abstract}
A finite set $ S \subset \mathbb{R} $ is called a Sidon set if all sums $ x+y $ with $ x,y \in S $ and $ x \le y $ are distinct, and a weak Sidon set if all sums $ x+y $ with $ x,y \in S $ and $ x < y $ are distinct.  
For a finite set $ A \subset \mathbb{R} $, let $ h(A) $ denote the maximum size of a Sidon subset of $ A $, and define
$$
g(n) := \min\{\, h(A) : A \subset \mathbb{R},\ |A| = n,\ A \text{ is a weak Sidon set} \,\}.
$$
S\'ark\"ozy and S\'os asked whether the limit $ \lim_{n\to\infty} g(n)/n $ exists and, if so, to determine its value. 
We resolve this problem completely by determining $g(n)$ exactly:
$$
g(n)=\left\lceil \frac{n+1}{2}\right\rceil
\qquad\text{for all } n\ge 1.
$$
In particular, $\lim_{n\to\infty} g(n)/n=\frac12$.

We also investigate a related problem of Erd\H{o}s concerning a local difference condition. 
A finite set $ A \subset \mathbb{R} $ is called a $(4,5)$-set if every $4$-element subset of $A$ determines at least five distinct values among its six pairwise absolute differences. 
Erd\H{o}s asked for the optimal constant $ c_* > 0 $ such that every $(4,5)$-set of size $ n $ contains a Sidon subset of size at least $ c_* n $. 
Gy\'arf\'as and Lehel reduced this to an extremal problem of $3$-uniform hypergraphs and proved $\frac{1}{2} + \frac{1}{141 \cdot 76} \le c_* \le \frac{3}{5}$. 
We improve both bounds by establishing
$$
\frac{9}{17} \le c_* \le \frac{4}{7},
$$
where the lower bound uses a reformulation of the extremal problem, and the upper bound follows from an explicit construction together with a convenient characterization of $c_*$.
\end{abstract}

\maketitle

\section{Introduction}\label{sec:Introduction}

Throughout this paper, a finite set \( S \subset \mathbb{R} \) is called a \emph{Sidon} set if all pairwise sums \( x+y \) with \( x,y \in S \) and \( x \le y \) are distinct. 
For a finite set \( A \subset \mathbb{R} \), let \( h(A) \) denote the maximum cardinality of a Sidon subset of \( A \).

Sidon sets are among the most intensively studied objects in additive combinatorics, owing to their deep connections with extremal combinatorics, number theory, and discrete geometry; see, e.g.,~\cite{Obr04}.
Motivated by problems posed by Erd\H{o}s and by S\'ark\"ozy and S\'os, we study the asymptotic density of the largest Sidon subsets contained in sets that are close to being Sidon.
We consider this problem under two related but distinct notions of proximity to the Sidon property, to be elaborated in the following two subsections.

\subsection{\texorpdfstring{A problem of S\'ark\"ozy and S\'os on weak Sidon sets}{A problem of Sarkozy and Sos on weak Sidon sets}}\label{subsec:SaSo-problem}
The first notion of proximity to being Sidon that we consider arises naturally. 
A finite set \( A \subset \mathbb{R} \) is called a \emph{weak Sidon} set if all sums \( x+y \) with \( x,y \in A \) and \( x<y \) are distinct; see, e.g.,~\cite{Kay05}.
In~\cite[p.~247]{SaSo13}, S\'ark\"ozy and S\'os asked how large a Sidon subset must be contained in a weak Sidon set. 
To state this formally, for each positive integer \( n\), define
\[
g(n):=\min\{\,h(A):\ A\subset\mathbb{R},\ |A|=n,\ A\text{ is a weak Sidon set}\,\}.
\]
They then explicitly posed the following problem.\footnote{The authors of~\cite{SaSo13} noted that the result of Gy\'arf\'as and Lehel~\cite{GyLe95} (which concerns $(4,5)$-sets; see Section~\ref{subsec:erdos-45}) may appear relevant to this problem. However, \cite{GyLe95} does not directly yield progress on it, since a weak Sidon set need not be a $(4,5)$-set; see Section~\ref{subsec:relations}.}

\begin{problem}[S\'ark\"ozy and S\'os \cite{SaSo13}, Problem~12]\label{prob:SaSo13_1}
Prove that the limit $\lim_{n\to\infty} \frac{g(n)}{n}$ exists and determine the limit.
\end{problem}

Our main result of this paper resolves Problem~\ref{prob:SaSo13_1} by proving the existence of the limit and determining the corresponding asymptotic constant.

\begin{theorem}\label{thm:limit_exists_v2}
The limit \( \gamma_* := \lim_{n\to\infty} \frac{g(n)}{n} \) exists and satisfies \( \gamma_*=\frac12 \).
\end{theorem}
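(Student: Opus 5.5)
The plan is to prove matching bounds $\lceil (n+1)/2\rceil \le g(n) \le \lceil (n+1)/2\rceil$, which gives the limit $\tfrac12$ at once. Everything rests on one reformulation. Let $A$ be weak Sidon, and suppose $a+b=c+d$ with $a\le b$, $c\le d$ and $\{a,b\}\neq\{c,d\}$ as multisets. Since $A$ is weak Sidon, at least one of the two pairs must be a repeated element. Two repeated pairs would force $a=b=c=d$, so exactly one is repeated, and the relation is $2x=y+z$ with $y<x<z$. Hence a subset of $A$ is Sidon if and only if it contains no $3$-term arithmetic progression. So $h(A)$ is the independence number of the $3$-uniform hypergraph $\mathcal H_A$ whose edges are the $3$-APs in $A$. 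Two structural facts about $\mathcal H_A$ follow from the weak Sidon property:
\begin{itemize}
\item each $x\in A$ is the middle term of at most one edge, because $y+z=2x$ has at most one solution with $y<z$;
\item $\min A$ and $\max A$ are never middle terms, so $|E(\mathcal H_A)|\le n-2$, and each edge is determined by its middle vertex.
\end{itemize}

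\textbf{Lower bound.} We need a transversal of $\mathcal H_A$ of size at most $\lfloor (n-1)/2\rfloor$. Orient each edge toward its middle. Then $\mathcal H_A$ becomes a family of triples $e_x=\{y_x,x,z_x\}$, one for each $x$ in a set $M\subseteq A\setminus\{\min A,\max A\}$. I would first try induction on $n$. Pick $x\in M$ with $x$ extremal in a suitable order, say the largest middle term. Put one endpoint of $e_x$ (preferably $z_x$, the larger one) into the transversal, and delete both $z_x$ and $x$ from the set. Removing two elements costs one transversal vertex, which is exactly the rate $(n-1)/2$. The key point to check is that deleting $x$ and $z_x$ leaves a weak Sidon set whose AP-hypergraph is a subhypergraph of $\mathcal H_A$; this is automatic, since subsets of weak Sidon sets are weak Sidon. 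The remaining worry is edges containing $x$ but not $z_x$: they are destroyed by the deletion but were not hit by the transversal vertex. To handle them, I would choose which vertex goes into the transversal so that it is always a vertex we actually delete. Concretely, delete the pair $\{x,z_x\}$ and put into the transversal whichever of the two lies in more remaining edges. Then argue that the edges through the other vertex can be re-covered within the inductive budget, using that $x$ is the middle of only one edge. The base cases $n\le 3$ are immediate. I expect this bookkeeping to be the main obstacle. An alternative I would try if the induction stalls is a discharging or weighting argument: view the functional structure $x\mapsto\{y_x,z_x\}$ as a digraph with out-degree at most $2$ at each middle vertex, and $2$-color each component so that the smaller class is a transversal.

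\textbf{Upper bound.} We need weak Sidon sets with $h(A)=\lceil (n+1)/2\rceil$, that is, $\mathcal H_A$ has transversal number $\lfloor (n-1)/2\rfloor$. Disjoint APs only give about $2n/3$, so the edges must overlap heavily. I would build a "zigzag chain" $x_1,\dots,x_n$ in which $x_i$ is the midpoint of $x_{i-1}$ and $x_{i+1}$ in the abstract hypergraph sense. This cannot be realized monotonically, since that would create a $4$-term AP and hence a weak Sidon violation. So I would instead use triples $\{x_{2i-1},x_{2i},x_{2i+1}\}$ arranged so that consecutive triples share a vertex, with alternating orientation: middle at the even vertex, with the shared odd vertices sometimes smaller and sometimes larger. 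The independence number of such a loose path or cycle on $n$ vertices is exactly $\lceil (n+1)/2\rceil$. To realize it, impose only the linear relations $2x_{2i}=x_{2i-1}+x_{2i+1}$ and choose the free parameters $x_{2i-1}$ generically, for instance linearly independent over $\mathbb Q$. Every unintended coincidence $a+b=c+d$ or $2a=c+d$ is then a nontrivial linear relation among the free parameters, so it is excluded by genericity. This gives a weak Sidon set with exactly the intended $3$-APs. I would verify that no forced relation makes two distinct pairs have equal sums; this is a finite check on the relation lattice.
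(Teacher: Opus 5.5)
Your reduction is right and is the paper's starting point: inside a weak Sidon set, Sidon subsets are exactly the $3$-AP-free ones, midpoints are injective, and $m_H\le n_H-2$. Deriving the limit from exact values of $g(n)$ would also be fine. But neither bound is actually proved, and the proposed construction cannot work.

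\textbf{Lower bound.} The inductive step fails as stated. Suppose you delete $x$ and $z_x$ but put only one of them, say $w$, into the transversal. Then every progression through the other deleted vertex that avoids $w$ is absent from $H(A\setminus\{x,z_x\})$, so the inductive transversal need not meet it. Knowing that $x$ is the middle of only one edge does not help. The unselected vertex can still be an endpoint of many progressions with distinct midpoints, and covering them costs extra vertices that the rate ``two deleted, one paid'' cannot absorb. What is missing is a transversal bound that uses the edge count. The paper applies the Chv\'atal--McDiarmid/Tuza inequality $4\tau(H)\le n_H+m_H$ for $3$-uniform hypergraphs, which with $m_H\le n_H-2$ immediately gives $\tau(H)\le (n-1)/2$. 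Your fallback, a proper $2$-colouring of $H(A)$, is asserted without justification. Even if one exists, it only gives $\tau\le\lfloor n/2\rfloor$, which misses the claimed value for even $n$.

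\textbf{Upper bound.} A loose path with $k$ edges on $2k+1$ vertices does not have independence number $k+1$. Each shared odd vertex lies in two consecutive edges, so $\{x_3,x_7,x_{11},\dots\}$ is a transversal of size $\lceil k/2\rceil$. Hence $\alpha=2k+1-\lceil k/2\rceil\approx\tfrac34 n$; already for $n=5$ you get $\alpha=4$, not $3$. Loose cycles behave the same way, so this construction would only give $\gamma_*\le 3/4$.

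This is structural, not a detail. By $4\tau\le n+m$, having $\tau\approx n/2$ forces $m\approx n$. For odd $n$ the exact value forces $m=n-2$, i.e.\ every element other than $\min A$ and $\max A$ must be the midpoint of a $3$-AP, whereas a loose path has only about $n/2$ edges. Moreover, once (nearly) every element is a midpoint, the midpoint relations leave essentially no free parameters; in the paper's example they determine the set up to an affine map. So genericity cannot be what rules out unintended coincidences.

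You need an explicit rigid configuration of heavily overlapping progressions, such as the paper's $A_{2k+1}=\{2^{k-i}3^i:0\le i\le k-1\}\cup\{2^{k-i+1}3^i:0\le i\le k\}$. That requires a direct check that it is weak Sidon (there via $3$-adic valuations) and a separate argument that its largest AP-free subset has size $k+1$.
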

In fact, we prove the following stronger statement, which determines $g(n)$ exactly.

\begin{theorem}\label{thm:gamma_star_bounds}
For every positive integer $n$,
\[
g(n)=\left\lceil \frac{n+1}{2}\right\rceil.
\]
\end{theorem}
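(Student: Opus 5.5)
The first step is to translate the Sidon condition inside a weak Sidon set into a hypergraph condition. Let $A$ be weak Sidon and suppose $S\subseteq A$ is not Sidon. Then there are two distinct multisets $\{a,b\}\neq\{c,d\}$ in $S$ with $a+b=c+d$. If both pairs consist of distinct elements, this contradicts weak Sidonness. If both are doubled, then $2a=2c$ forces $a=c$. So the only possible failure is $2x=y+z$ with $y<x<z$, which is a $3$-term arithmetic progression in $S$. Hence $h(A)$ is the independence number of the $3$-uniform hypergraph $\mathcal H_A$ whose edges are the $3$-APs in $A$, and $h(A)=n-\tau(\mathcal H_A)$, where $\tau$ denotes the transversal number. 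Weak Sidonness gives strong structure on $\mathcal H_A$:
\begin{enumerate}[label=(\roman*)]
\item each $x$ is the middle of at most one edge, since $y+z=y'+z'=2x$ with $y<z$, $y'<z'$ would violate weak Sidonness;
\item $\min A$ and $\max A$ are never middles, so $|\mathcal H_A|\le n-2$;
\item two edges share at most one vertex, apart from trivial cases that are checked directly.
\end{enumerate}
The theorem thus splits into two parts: (a) $\tau(\mathcal H_A)\le\lfloor (n-1)/2\rfloor$ for every weak Sidon $A$; (b) there is a weak Sidon set attaining this bound.

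For (a) I would argue by induction on $n$, removing elements from the top of $A$. Let $M=\max A$. If $M$ lies in no $3$-AP, delete it, apply induction to $A\setminus\{M\}$, and add $M$ back to the Sidon subset; this gains one element from one removed element, which is more than enough. Otherwise, delete a suitable pair $\{u,v\}$ with $M\in\{u,v\}$, apply induction to $A\setminus\{u,v\}$ to get an AP-free $S'$ of size $\ge\lceil (n-1)/2\rceil$, and then show that one of $u,v$ can be added to $S'$ without creating a $3$-AP. The natural choice is $v=M$ and $u$ the middle $x$ of some AP $(y,x,M)$. Then $M$ can only form new APs with pairs in $S'$ whose middle is some $w$ with $2w=M+t$. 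I would choose $x$ among the candidate middles (for example the largest one, or one chosen by a charging argument) so that either $M$ or $x$ is safe to add. If this pairing fails, the fallback is a global argument. Orient each edge toward its unique middle, view $\mathcal H_A$ as a "middle-injective" hypergraph with at most $n-2$ edges, and prove $\tau\le\lfloor (n-1)/2\rfloor$ for all such hypergraphs. The key tool there is that every vertex set in which each vertex is the middle of an edge can be split into two halves, each containing a transversal. I would try a $2$-colouring of $A$ in increasing order: send each middle to the colour class opposite its successor along a chain of "middle $\to$ endpoint" pointers, and take the smaller class as the transversal. These pointer chains form a functional digraph, whose cycles have to be handled separately, and the parity of $n$ gives the $-1$.

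For (b) I would construct, for each $n$, an abstract middle-injective $3$-uniform hypergraph $\mathcal H_n$ on $n$ vertices with independence number exactly $\lceil (n+1)/2\rceil$. For odd $n$ a candidate is two interleaved chains, with some edges $(x_{i-1},y_i,x_i)$ and others using the $x$'s as middles, so that every set of more than $(n+1)/2$ vertices contains an edge. I would then realize $\mathcal H_n$ in $\mathbb R$ by solving the linear system $2x_{\mathrm{mid}}=x_{\mathrm{left}}+x_{\mathrm{right}}$ over its edges, choosing a generic point of the solution space. Genericity will rule out every further coincidence $a+b=c+d$, so the set is weak Sidon, and it will also rule out any unintended $3$-APs. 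The even case follows by adding one generic extra point, or by a similar adjustment.

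I expect the main obstacle to be the lower bound (a): getting the sharp constant $\tfrac12$, not merely something like $\tfrac13$, requires using the "unique middle" structure carefully. The inductive pairing step, or the handling of cycles in the pointer digraph, is where I expect most of the work to go.
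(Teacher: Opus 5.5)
Your reduction (Sidon $\Leftrightarrow$ AP-free inside a weak Sidon set, each point the middle of at most one AP, $\min A$ and $\max A$ never middles, hence at most $n-2$ APs) is exactly the paper's starting point. Item (iii), however, is false for weak Sidon sets. In $\{1,2,3,5\}$ the APs $\{1,2,3\}$ and $\{1,3,5\}$ share two points, and this is not a degenerate case that can be checked away. Fortunately you never use linearity.

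The real problem is that neither half of the theorem is actually proved. For the lower bound (a), the inductive step ``one of $u,v$ can be added to $S'$'' has no argument behind it. Nothing in the induction hypothesis controls which pairs of $S'$ complete an AP with $M$ or with $x$, so as stated this is a hope rather than a step. The paper closes (a) in one line: the Chv\'atal--McDiarmid/Tuza bound $4\tau(H)\le n_H+m_H$ for $3$-uniform hypergraphs, together with $m_H\le n_H-2$, gives $\tau\le (n-1)/2$.

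Your fallback can also be completed, but not in the form you describe.
Point each middle $x$ of its AP $(y,x,z)$ to its \emph{upper} endpoint $z$. Since $z>x$, the pointers form a rooted forest whose roots are the non-middles, so there are no cycles to handle.
Colour each vertex by the parity of its depth, choosing the colour of each tree's root independently. Then $x$ and $z$ always get different colours, so no AP is monochromatic, and each colour class is AP-free, hence Sidon.
The extra unit does not come from the parity of $n$: for even $n$ a balanced colouring only gives $n/2$. It comes from the fact that $\min A$ is neither a middle nor an upper endpoint, so $\{\min A\}$ is a singleton tree.
Orient every tree so that its larger parity class gets colour $0$. Then colour $0$ exceeds colour $1$ by at least one, giving $h(A)\ge\lceil (n+1)/2\rceil$. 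This argument is more elementary than the paper's, since it needs neither the transversal theorem nor the edge count $m_H\le n_H-2$.

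For the upper bound (b), the hypergraph is not pinned down, its independence number is not computed, and the genericity step does not do what you want. A sharp odd example must have exactly $n-2$ APs: with fewer, $4\tau\le n+m$ would already give a larger Sidon subset. So every point other than $\min A$ and $\max A$ is a middle.
Whenever these $n-2$ equations $2x_{\mathrm{mid}}=x_{\mathrm{left}}+x_{\mathrm{right}}$ are independent, their solution space is just the affine images $qv+t$ of one configuration $v$. This is the case for the paper's hypergraph, where $X_{i+1}=(X_i+Y_i)/2$ and $Y_{i+1}=2Y_i-X_i$ determine everything from $X_0,Y_0$.
Affine maps preserve every relation $a+b=c+d$, so a ``generic'' point is no better than $v$ itself. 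You must verify by hand that this rigid configuration is weak Sidon and has no unintended $3$-APs.

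That verification is the substance of the paper's upper bound. It uses the explicit set $X_i=2^{n-i}3^i$, $Y_i=2^{n-i+1}3^i$, proves it weak Sidon by a $3$-adic valuation argument, and shows only the $2n-1$ intended APs occur (again by $m_H\le n_H-2$). It then gets $\alpha=n+1$ by counting, for each maximal run of consecutive $X_i$'s in an AP-free set, the $Y$'s that run excludes. Your even case (add a generic point, or delete one as the paper does) is fine once the odd case is in place.
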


We refer readers to Section~\ref{subsec:proof-ideas} for the proof sketches of these results.

\subsection{\texorpdfstring{A problem of Erd\H{o}s on $(4,5)$-sets}{A problem of Erdos on (4,5)-sets}}\label{subsec:erdos-45}

Our second notion of proximity to being Sidon, first studied by Erd\H{o}s~\cite{Er97b}, is defined as follows.
A finite set \(A\subset\mathbb{R}\) is called a \emph{\((4,5)\)-set} if for every choice of four distinct elements
\(x_1,x_2,x_3,x_4\in A\), one has\footnote{The notions of Sidon sets, \((4,5)\)-sets, and weak Sidon sets form a strict chain of implications; see Figure~\ref{fig:three-notions} and the discussion in Section~\ref{subsec:relations}.}
\[
\Bigl|\bigl\{|x_i-x_j|:\ 1\le i<j\le 4\bigr\}\Bigr|\ge 5.
\]
This concept is closely related to an anti-Ramsey problem (see~\cite{BCDP24,JM24}). 
In~\cite[p.~231]{Er97b}, Erd\H{o}s posed the following problem, which is also listed as Problem~\#757 on Bloom's website~\cite{EP757}.

\begin{problem}[Erd\H{o}s \cite{Er97b}]\label{prob:erdos-757}
Let $A\subset \mathbb{R}$ be a $(4,5)$-set of size $n$. Find the best constant $c_*>0$ such that $A$ must always contain a Sidon set of size at least $c_*n$.
\end{problem}

\noindent For convenience, for each positive integer \( n \), define
\[
f(n)\ :=\ \min\{\,h(A):\ A\subset \mathbb{R},\ |A|=n,\ A\text{ is a }(4,5)\text{-set}\,\}.
\]
Gy\'arf\'as and Lehel~\cite{GyLe95} proved that for all positive integers \(n\),
\begin{equation}\label{eq:1211417635}
\left(\frac{1}{2}+\frac{1}{141\cdot 76}\right)n  \le f(n) \le \frac{3}{5}n+1.
\end{equation}
Their upper bound is constructed using an infinite sequence based on Fibonacci numbers, while the lower bound is reduced to a problem on the transversal number of certain \(3\)-uniform linear hypergraphs.

In analogy with Theorem~\ref{thm:limit_exists_v2}, we first prove that the limit \( \lim_{n\to\infty} \frac{f(n)}{n} \) exists and equals the optimal constant \( c_* \) in Problem~\ref{prob:erdos-757}.

\begin{theorem}\label{thm:limit_exists_v1}
The limit \( \lim_{n\to\infty} \frac{f(n)}{n} \) exists and satisfies
\(
c_*=\lim_{n\to\infty}\frac{f(n)}{n}
=
\inf_{n\ge 1}\frac{f(n)}{n}.
\)
\end{theorem}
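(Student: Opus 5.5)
The plan is to prove that $f$ is \emph{superadditive up to scaling}: from any $(4,5)$-set we build larger $(4,5)$-sets by taking many far-apart, generically dilated copies, and the largest Sidon subset of the blow-up is controlled by that of a single copy. Concretely, I aim for
\[
f(kn)\le k\,f(n)\qquad\text{for all }k,n\ge 1,
\]
together with the trivial monotonicity $f(m)\le f(m+1)$. The monotonicity holds because deleting any element of an extremal $(4,5)$-set of size $m+1$ leaves a $(4,5)$-set of size $m$ whose Sidon subsets are Sidon subsets of the original. Given these two facts, the argument is the standard Fekete-type one. Fix $n$ and write $m=kn+r$ with $0\le r<n$. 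Then
\[
f(m)\le f((k+1)n)\le (k+1)f(n),
\]
so $\limsup_m f(m)/m\le f(n)/n$. Since this holds for every $n$, we get $\limsup_m f(m)/m\le \inf_n f(n)/n\le \liminf_m f(m)/m$. Hence the limit exists and equals $\inf_n f(n)/n$. Finally, $c_*$ is by definition the best constant with $h(A)\ge c_*|A|$ for all $(4,5)$-sets $A$, that is, $c_*=\inf_n f(n)/n$.

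\textbf{Construction.} Let $A$ be a $(4,5)$-set of size $n$ with $h(A)=f(n)$. Set
\[
B=\bigcup_{i=1}^k (t_i+\lambda_i A),
\]
with shifts $t_i$ and dilations $\lambda_i>0$ chosen generically, in the sense that they avoid a finite union of proper algebraic conditions (for instance $t_i,\lambda_i$ algebraically independent over $\mathbb{Q}(A)$).

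\textbf{Key step 1: $B$ is a $(4,5)$-set.} A $4$-subset inside one copy is fine, since dilation and translation preserve the number of distinct differences. For a $4$-subset meeting at least two copies, each coincidence $|x_i-x_j|=|x_k-x_l|$ among its six differences becomes a linear relation in the $t$'s and $\lambda$'s. I will argue that such a relation is a nontrivial identity unless it is forced, and that generically at most one forced coincidence can occur, so at least five distinct differences remain.

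A forced coincidence occurs only when both pairs lie in the same copy with $|a-b|=|c-d|$ in $A$. It can also occur when $\{x_i,x_j\}$ and $\{x_k,x_l\}$ are two cross pairs between the same two copies whose $t$-terms cancel and whose $\lambda$-terms cancel. That second case means the pairs are $\{u,v'\}$ and $\{u',v\}$ with the same elements in each copy, which is impossible for distinct points.

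\textbf{Key step 2: $h(B)\le k\,h(A)$.} A Sidon subset of $B$ restricts to a Sidon subset of each copy. Its intersection with $t_i+\lambda_iA$ corresponds to a Sidon subset of $A$, because the Sidon property is affine invariant. So each piece has size at most $h(A)$, giving $h(B)\le k\,h(A)=k\,f(n)$ and hence $f(kn)\le k f(n)$.

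The main obstacle is the case analysis in step 1: checking that no two \emph{independent} coincidences can be forced simultaneously in a $4$-set spread over two copies, for example split $2+2$ or $3+1$. In the $3+1$ case, the only possibly forced equalities are within the triple, which has just three differences, and a $(4,5)$-set cannot contain three equal differences among three points. I will verify the $2+2$ and $2+1+1$ splits by listing the possible linear forms. If genericity alone is awkward to justify, the fallback is to take $t_i$ growing very fast, say $t_{i+1}>10^{i}\,(t_i+\operatorname{diam})$, and $\lambda_i=1$. I would then check the few differences of comparable magnitude directly.
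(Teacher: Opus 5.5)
Your main argument is correct, but it is organized differently from the paper's. The paper proves genuine subadditivity $f(m+n)\le f(m)+f(n)$ by gluing two arbitrary $(4,5)$-sets $A$ and $qB+t$. A large shift $t$ separates mixed differences from within-block differences by size, and choosing $q$ outside the finite set $\mathcal R=\{(a-a')/(b-b')\}$ makes all mixed differences pairwise distinct. Fekete's lemma is then quoted.

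You instead glue $k$ copies of one extremal set, with $t_i,\lambda_i$ linearly independent over the countable field $\mathbb{Q}(A)$. Then a coincidence $x_1-x_2=\pm(x_3-x_4)$ can hold only if, in every copy, both the signed number of points and the signed sum of their $A$-coordinates vanish. With this criterion your case analysis closes at once:
\begin{itemize}
\item a $2+2$, $2+1+1$ or $1+1+1+1$ split admits no coincidence at all;
\item a $3+1$ split admits at most the single coincidence inside the triple, leaving at least five distinct differences.
\end{itemize}
You only get $f(kn)\le kf(n)$. However, heredity of the $(4,5)$-property gives monotonicity of $f$, and your interpolation then reproduces Fekete's conclusion correctly. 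Your route treats many copies uniformly by one algebraic criterion and needs no magnitude estimates. The paper's route is explicit and yields subadditivity for arbitrary pairs of sizes.

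Do discard the fallback, though. With $\lambda_i=1$ it fails no matter how fast the $t_i$ grow. For any $a<b$ in $A$, the points $t_i+a,\ t_i+b,\ t_j+a,\ t_j+b$ form a parallelogram whose six differences take only the four values $b-a$, $t_j-t_i$, and $t_j-t_i\pm(b-a)$. The dilations are essential: this is exactly the configuration the paper excludes by requiring $q\notin\mathcal R$ (note that $1\in\mathcal R$ when $B=A$). Genericity needs no fallback anyway, since $\mathbb{Q}(A)$ is countable and suitable $t_i,\lambda_i>0$ exist.
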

 
The bounds \eqref{eq:1211417635} of Gy\'arf\'as and Lehel~\cite{GyLe95} then correspond to
\( \frac{1}{2}+\frac{1}{141\cdot 76}  \le  c_{*}  \le  \frac{3}{5}.\)
We improve both bounds as follows.

\begin{theorem}\label{thm:c_star_bounds}
It holds that
\[
\frac{9}{17} \le  c_*  \le  \frac{4}{7}.
\]
\end{theorem}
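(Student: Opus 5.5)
The plan is to translate both bounds into a statement about the transversal number of the hypergraph of $3$-term arithmetic progressions, following Gy\'arf\'as and Lehel. Let $A$ be a $(4,5)$-set of size $n$. Consider a violation $x+y=z+w$ with $\{x,y\}\neq\{z,w\}$ and all four elements distinct. Then $w-x=y-z$ and $z-x=y-w$, so these four points span at most four distinct differences, which is forbidden. Hence the only obstructions to the Sidon property inside $A$ are $3$-term progressions $x+z=2y$.

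Let $H_A$ be the $3$-uniform hypergraph on $A$ whose edges are the $3$-APs. A Sidon subset of $A$ is exactly an independent set of $H_A$, so
\[
h(A)=n-\tau(H_A).
\]
Moreover, $H_A$ is linear: two progressions sharing two points span at most four points with a repeated difference. It also carries extra structure coming from the line, namely the left/middle/right roles of points in each edge, and forbidden overlapping patterns inherited from the $(4,5)$ condition on other quadruples. The lower bound therefore amounts to proving $\tau(H_A)\le \frac{8}{17}n$ for every such $H_A$. The upper bound amounts to exhibiting one $(4,5)$-set with $h(A)/|A|\le \frac47$, since by Theorem~\ref{thm:limit_exists_v1} we have $c_*=\inf_n f(n)/n$.

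\textbf{Upper bound.} I would look for a small explicit $(4,5)$-set, ideally with $n=7$ and $h(A)=4$. This requires the AP hypergraph on $7$ points to have independence number $4$, which is reminiscent of the Fano plane with $\tau=3$. If $7$ points cannot be realized, I would try $n=14$ with $h=8$, or a few blocks combined as $A+M\cdot B$ with $M$ huge. The verification is finite: check that every $4$-subset has at least five distinct differences, and check that every $5$-subset, respectively every $(\lfloor 4n/7\rfloor+1)$-subset, contains a $3$-AP. The infimum characterization in Theorem~\ref{thm:limit_exists_v1} then immediately yields $c_*\le 4/7$.

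\textbf{Lower bound.} I would reformulate the extremal problem as follows: every linear $3$-uniform hypergraph satisfying the local constraints forced by the $(4,5)$ property and the linear order has an independent set of size at least $\frac{9}{17}n$. The natural approach is a minimal-counterexample argument.
\begin{enumerate}
\item Identify reducible configurations, for example a vertex of degree at most $1$, or two edges sharing the leftmost/rightmost point.
\item For each such configuration, delete a bounded set of $k$ vertices, put $j$ of them into the independent set with $j/k\ge 9/17$, and ensure no conflict with the remaining hypergraph.
\item In the irreducible case, use the ordering to reach a contradiction. The extreme points of $A$ are never middles of progressions, and the configurations of progressions sharing a middle point are restricted.
\end{enumerate}
A complementary option is a weighted or probabilistic argument: choose each vertex independently with probability $p$ and delete one vertex from each surviving edge, then optimize using degree information.

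\textbf{Main obstacle.} I expect the hardest step to be the lower bound. Pure linearity gives only about $1/2$, so one must find exactly which extra structural consequences of the $(4,5)$ condition rule out dense AP configurations, and then organize the discharging or case analysis so that the constant $9/17$ comes out. My first attempt would be to classify the neighborhoods of the leftmost few points of $A$ and use them as the reducible configurations for induction on $n$.
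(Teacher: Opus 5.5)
Your reduction $h(A)=n-\tau(H_A)$, the linearity of $H_A$, and the plan to combine one finite example with $c_*=\inf_n f(n)/n$ all agree with the paper. However, neither inequality is actually proved.

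\textbf{Upper bound.} For $c_*\le\frac47$ the whole content is an explicit $(4,5)$-set, and you do not give one. Your preferred target $n=7$, $h=4$ is in fact impossible. Two distinct $3$-APs in a weak Sidon set cannot share a midpoint, since their endpoints would give $x_1+x_4=x_2+x_3$ with four distinct elements. Hence $m_H\le n_H-2=5$, and the Henning--Yeo bound below gives $17\tau(H_A)\le 5\cdot 7+3\cdot 5=50$, i.e.\ $\tau\le 2$ and $h(A)\ge 5$.

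Combining blocks does not help either. The sumset $A+M\cdot B$ is not even weak Sidon, since $(a+Mb)+(a'+Mb')=(a'+Mb)+(a+Mb')$. A well-separated union $A\cup(qB+t)$ has no cross-block progressions, so $h$ is additive there and the ratio $h/|\cdot|$ is only a mediant of the blocks' ratios. The paper settles this step with a concrete $14$-point set $A_{\mathrm{base}}$ with $h(A_{\mathrm{base}})=8$, found by computer search and checked exhaustively. Saying you would ``try $n=14$ with $h=8$'' does not replace exhibiting such a set.

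\textbf{Lower bound.} For $c_*\ge\frac{9}{17}$, your minimal-counterexample outline exhibits no reducible configuration, and nothing in it produces the constant $\frac{9}{17}$. The alteration argument you mention gives only $\max_p (p-p^3)n=\frac{2}{3\sqrt3}n\approx 0.385n$ when $m\approx n$. That is below the bound $\frac{n+1}{2}$ which Chv\'atal--McDiarmid already yields. Three specific facts are missing:
(i) the midpoint map $E(H_A)\to A\setminus\{\min A,\max A\}$ is injective, so $m_H\le n_H-2$ (you note that extreme points are never midpoints, but never turn this into an edge count);
(ii) $H_A$ contains no copy of $F_7$ (Gy\'arf\'as--Lehel), which is the exclusion that makes (iii) applicable;
(iii) the Henning--Yeo inequality $17\tau(H)\le 5n_H+3m_H$ holds for $3$-uniform linear $F_7$-free hypergraphs.

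Combined, these give $17\tau(H_A)\le 8n-6$, hence $h(A)\ge\frac{9}{17}n$ at once. Without (ii) and (iii), your discharging route would amount to rediscovering and reproving the Henning--Yeo theorem, which is where the constant $\frac{9}{17}$ actually comes from.
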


We point out that Theorem~\ref{thm:limit_exists_v1} is essential for the upper bound in Theorem~\ref{thm:c_star_bounds}. Once
\(
c_*=\inf_{n\ge1} f(n)/n
\)
is established, any single finite \((4,5)\)-set \(A\) with a small enough ratio between its largest Sidon subset and \(|A|\) immediately yields an upper bound on \(c_*\).

\subsection{Proof strategy}\label{subsec:proof-ideas}
We outline the main ideas behind the proofs of our results in this subsection.
The proofs of the existence statement in Theorem~\ref{thm:limit_exists_v2} and Theorem~\ref{thm:limit_exists_v1} are both based on a subadditivity argument: we show that $g(n)$ and $f(n)$ are subadditive, and the existence of the limits then follows from Fekete's lemma.

We first sketch the proof strategy for Theorem~\ref{thm:c_star_bounds}. 
For the upper bound $c_*\le 4/7$, we construct an explicit $14$-point $(4,5)$-set whose largest Sidon subset has size $8$, verified by exhaustive enumeration. Combined with the identity $c_*=\inf_{n\ge 1} f(n)/n$ from Theorem~\ref{thm:limit_exists_v1}, this already yields \(c_*\le 4/7\).
For the lower bound $c_*\ge 9/17$, we build on the A.P.-hypergraph framework of Gy\'arf\'as and Lehel~\cite{GyLe95}, which we now define below.

\begin{definition}[A.P.-hypergraph]\label{defn:aphyper_v1}
Let \(A=\{a_1,\dots,a_n\}\subset \mathbb R\) be a finite set, indexed so that \(a_1<\cdots<a_n\). The \emph{A.P.-hypergraph} of \(A\) is the \(3\)-uniform hypergraph \(H(A)=(V,E)\) on vertex set \(A\) whose edges are precisely the triples
\( \{a_i,a_j,a_k\} \)
for which \(a_i,a_j,a_k\) form a \(3\)-term arithmetic progression, where \( 1\le i<j<k\le n \).
\end{definition}

Let \(\alpha(H)\) and \(\tau(H)\) denote the independence number and the transversal number of a hypergraph \(H\), respectively.
When $A$ is a $(4,5)$-set, Sidon subsets of $A$ correspond exactly to independent sets of $H(A)$; in particular,
$h(A)=\alpha(H(A))$ (see Lemma~\ref{lem:sidon-iff-apfree}).
Since $\alpha(H(A))+\tau(H(A))=|V(H(A))|=n$,
it is equivalent to working with the transversal number $\tau(H(A))$.

\begin{figure}[htbp]
\centering

\begin{tikzpicture}[scale=0.6]

\tikzstyle{vertexX}=[circle,draw, fill=black!100, minimum size=7pt, inner sep=0.2pt]

\newcommand{\hyperedgetwo}[6]{
  \pgfmathsetmacro\Done{sqrt((#4-#1)^2+(#5-#2)^2)}
  \pgfmathsetmacro\angleone{(#2>#5)*(180+asin((#4-#1)/ \Done)-asin((#1-#4)/ \Done))+asin((#1-#4)/ \Done)+asin((#3-#6)/\Done)}
  \pgfmathsetmacro\angleone{\angleone-360*(\angleone>0)-360*(\angleone>360)}
  \draw ([shift=(\angleone:#3)] #1,#2)--([shift=(\angleone:#6)]#4,#5);
  \pgfmathsetmacro\Dtwo{sqrt((#1-#4)^2+(#2-#5)^2)}
  \pgfmathsetmacro\angletwo{(#5>#2)*(180+asin((#1-#4)/ \Dtwo)-asin((#4-#1)/ \Dtwo))+asin((#4-#1)/ \Dtwo)+asin((#6-#3)/\Dtwo)}
  \pgfmathsetmacro\angletwo{\angletwo-360*(\angletwo>0)-360*(\angletwo>360)}
  \draw ([shift=(\angletwo:#6)] #4,#5)--([shift=(\angletwo:#3)]#1,#2);
  \draw (#1,#2)+(\angletwo:#3) arc(\angletwo:\angleone+360*(\angleone<\angletwo):#3);
  \draw (#4,#5)+(\angleone:#6) arc(\angleone:\angletwo+360*(\angletwo<\angleone):#6);
}

\newcommand{\hyperedgethree}[9]{
  \pgfmathsetmacro\Done{sqrt((#4-#1)^2+(#5-#2)^2)}
  \pgfmathsetmacro\angleone{(#2>#5)*(180+asin((#4-#1)/ \Done)-asin((#1-#4)/ \Done))+asin((#1-#4)/ \Done)+asin((#3-#6)/\Done)}
  \pgfmathsetmacro\angleone{\angleone-360*(\angleone>0)-360*(\angleone>360)}
  \draw ([shift=(\angleone:#3)] #1,#2)--([shift=(\angleone:#6)]#4,#5);

  \pgfmathsetmacro\Dtwo{sqrt((#7-#4)^2+(#8-#5)^2)}
  \pgfmathsetmacro\angletwo{(#5>#8)*(180+asin((#7-#4)/ \Dtwo)-asin((#4-#7)/ \Dtwo))+asin((#4-#7)/ \Dtwo)+asin((#6-#9)/\Dtwo)}
  \pgfmathsetmacro\angletwo{\angletwo-360*(\angletwo>0)-360*(\angletwo>360)}
  \draw ([shift=(\angletwo:#6)] #4,#5)--([shift=(\angletwo:#9)]#7,#8);

  \pgfmathsetmacro\Dthree{sqrt((#1-#7)^2+(#2-#8)^2)}
  \pgfmathsetmacro\anglethree{(#8>#2)*(180+asin((#1-#7)/ \Dthree)-asin((#7-#1)/ \Dthree))+asin((#7-#1)/ \Dthree)+asin((#9-#3)/\Dthree)}
  \pgfmathsetmacro\anglethree{\anglethree-360*(\anglethree>0)-360*(\anglethree>360)}
  \draw ([shift=(\anglethree:#9)] #7,#8)--([shift=(\anglethree:#3)]#1,#2);

  \draw (#1,#2)+(\anglethree:#3) arc(\anglethree:\angleone+360*(\angleone<\anglethree):#3);
  \draw (#4,#5)+(\angleone:#6) arc(\angleone:\angletwo+360*(\angletwo<\angleone):#6);
  \draw (#7,#8)+(\angletwo:#9) arc(\angletwo:\anglethree+360*(\anglethree<\angletwo):#9);
}

\node (T)  at (0,4.2)   [vertexX] {};   
\node (ML) at (-2.1,2.1)[vertexX] {};   
\node (MC) at (0,2.1)   [vertexX] {};   
\node (MR) at (2.1,2.1) [vertexX] {};   
\node (BL) at (-4.2,0)  [vertexX] {};   
\node (BC) at (0,0)     [vertexX] {};   
\node (BR) at (4.2,0)   [vertexX] {};   

\tikzset{hedge/.style={color=black, line width=0.9pt}}

\begin{scope}[hedge]

  \hyperedgetwo{0}{4.2}{0.45}{-4.2}{0}{0.45}

  \hyperedgetwo{0}{4.2}{0.43}{0}{0}{0.43}

  \hyperedgetwo{0}{4.2}{0.45}{4.2}{0}{0.45}

  \hyperedgetwo{-2.1}{2.1}{0.45}{2.1}{2.1}{0.45}

  \hyperedgetwo{-4.2}{0}{0.45}{4.2}{0}{0.45}

\end{scope}

\end{tikzpicture}
\caption{The \(7\)-vertex \(3\)-uniform hypergraph \(F_7\).}
\label{fig:F7}

\end{figure}

Moreover, the condition of being a \((4,5)\)-set imposes strong structural restrictions on \( H(A) \). 
One can show that the hypergraph \( H(A) \) is linear, that different edges (corresponding to \(3\)-term arithmetic progressions in \( A \)) have distinct middle points (with respect to the natural order on \( \mathbb{R} \)), and that \( H(A) \) is \( F_7 \)-free. 
Here and throughout, \( F_7 \) denotes the \(3\)-uniform hypergraph on vertex set \( \{0,1,2,3,4,5,6\} \) with
\(
E(F_7)=\bigl\{\{0,1,2\},\{0,3,4\},\{0,5,6\},\{1,3,5\},\{2,4,6\}\bigr\},
\)
as illustrated in Figure~\ref{fig:F7}.
Gy\'arf\'as and Lehel~\cite[Theorem~3.2]{GyLe95} combined this reduction with a transversal estimate to obtain the
lower bound in~\eqref{eq:1211417635}. 
Using the same framework, but replacing their transversal estimate
by the stronger inequality of Henning and Yeo~\cite[Theorem~7]{HeYe21} for \(3\)-uniform linear \(F_7\)-free
hypergraphs, we obtain the improved lower bound $c_*\ge 9/17$.

We now discuss Theorem~\ref{thm:gamma_star_bounds}. For the lower bound, let $A$ be a weak Sidon set and pass to its A.P.-hypergraph $H(A)$. Combining the general inequality \(|E(H(A))|\le |V(H(A))|-2\) (see Lemma~\ref{lem:mleqn}) with an upper bound on the transversal number of a $3$-uniform hypergraph due to Chv\'{a}tal and McDiarmid~\cite{ChMc92} and independently Tuza~\cite{Tuz90} (see Theorem~\ref{thm:CMT_v1}) gives \(h(A)\ge |A|/2+1/2\), and hence \(g(n)\ge \left\lceil \frac{n+1}{2}\right\rceil\). For the matching upper bound, for each $k\ge 2$ we construct a weak Sidon set $A_{2k+1}$ with $|A_{2k+1}|=2k+1$ and $h(A_{2k+1})=k+1$ (see Proposition~\ref{prop:A2n1_weak_sidon} and Theorem~\ref{thm:A2n1_h_equals_nplus1}), which gives \(g(2k+1)\le k+1\). For even orders, any $2k$-element subset of $A_{2k+1}$ is again weak Sidon and has largest Sidon subset of size at most $k+1$, so \(g(2k)\le k+1\). Together with the lower bound, this gives \(g(n)=\left\lceil \frac{n+1}{2}\right\rceil\) for all $n\ge 1$, where the cases $n=1,2,3$ are immediate. In particular, \(\gamma_*=\frac12\); combined with the existence of the limit established earlier, this completes the proof of Theorem~\ref{thm:limit_exists_v2}.

\subsection{Paper organization}\label{subsec:organization}

Section~\ref{sec:preliminaries} records the relationships among Sidon sets, \((4,5)\)-sets, and weak Sidon sets, and collects basic facts about A.P.-hypergraphs needed for our lower-bound arguments. Section~\ref{sec:existence_of_limit} establishes the existence of the asymptotic constants $\gamma_*$ and $c_*$ by proving the first assertion of Theorem~\ref{thm:limit_exists_v2} and Theorem~\ref{thm:limit_exists_v1}. Section~\ref{sec:weak_sidon} proves Theorem~\ref{thm:gamma_star_bounds}, namely, \(g(n)=\left\lceil \frac{n+1}{2}\right\rceil\) and hence yields the second assertion of Theorem~\ref{thm:limit_exists_v2} that $\gamma_*=\frac12$. 
Section~\ref{sec:c_star_bounds} proves Theorem~\ref{thm:c_star_bounds}, improving both bounds for \(c_*\).

\section{Preliminaries}\label{sec:preliminaries}

\subsection{\texorpdfstring{Relations between Sidon sets, $(4,5)$-sets, and weak Sidon sets}{Sidon sets, (4,5)-sets, and weak Sidon sets}}\label{subsec:relations}

We first establish a general relation between Sidon sets, \((4,5)\)-sets, and weak Sidon sets, as illustrated in Figure~\ref{fig:three-notions}.

\medskip

\begin{figure}[!htbp]
\centering
\begin{tikzpicture}[every node/.style={font=\small}]
  \node[draw, rectangle, minimum width=3.1cm, minimum height=1.05cm, align=center] (sidon) {Sidon sets};
  \node[right=0.9cm of sidon, font=\Large] (imp1) {$\subsetneq$};
  \node[draw, rectangle, minimum width=3.1cm, minimum height=1.05cm, align=center, right=0.9cm of imp1] (f45) {$(4,5)$-sets};
  \node[right=0.9cm of f45, font=\Large] (imp2) {$\subsetneq$};
  \node[draw, rectangle, minimum width=3.3cm, minimum height=1.05cm, align=center, right=0.9cm of imp2] (wsidon) {weak Sidon sets};
\end{tikzpicture}
\medskip
\caption{Inclusions among Sidon sets, $(4,5)$-sets, and weak Sidon sets.}
\label{fig:three-notions}
\end{figure}
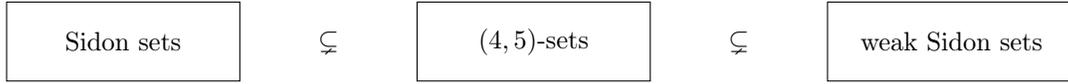

We prove these implications in the following two propositions and give examples showing that the inclusions are strict.

\begin{proposition}\label{prop:sidon_implies_45}
Every Sidon set is a \((4,5)\)-set.
\end{proposition}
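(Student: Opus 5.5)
We argue by contradiction. Let $S$ be a Sidon set, and suppose some four distinct elements $x_1<x_2<x_3<x_4$ of $S$ determine at most four distinct values among their six pairwise differences. Then two of the six differences coincide. Write such an equality as $b-a=d-c$, where $\{a,b\}\neq\{c,d\}$ are two distinct pairs from $\{x_1,\dots,x_4\}$ with $a<b$ and $c<d$. Rearranging gives
\[
b+c=a+d .
\]
The aim is to show that this is a forbidden coincidence of sums in $S$.

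We split into two cases according to how the pairs $\{a,b\}$ and $\{c,d\}$ meet.

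\emph{Case 1: the pairs are disjoint.} Then $a,b,c,d$ are four distinct elements. Since $a<b$ and $c<d$, we have $\{b,c\}\neq\{a,d\}$ as unordered pairs: equality would force $b=a$ or $b=d$, which is impossible. Hence $b+c=a+d$ expresses one value as a sum of two different pairs of elements of $S$. This contradicts the Sidon property.

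\emph{Case 2: the pairs share one element.} They cannot share two, since they are distinct. If $a=c$, then $b-a=d-a$ gives $b=d$, a contradiction; symmetrically $b=d$ forces $a=c$. So the shared element satisfies either $b=c$ or $a=d$. In the first case $a<b<d$ and $2b=a+d$. In the second case $c<a<b$ and $2a=b+c$. Either way a doubled element equals the sum of two distinct other elements. This is again two distinct representations of one sum with $x\le y$, so the Sidon property is violated.

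In both cases we reach a contradiction, so every $4$-element subset of $S$ yields six, and in particular at least five, distinct differences. This proves Proposition~\ref{prop:sidon_implies_45}.

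The only step needing care is the case analysis on how the two coinciding differences overlap. The key point is that the case of a shared endpoint produces a $3$-term arithmetic progression. Such a progression is excluded for Sidon sets, because the definition allows $x=y$, though it would not be excluded for weak Sidon sets.
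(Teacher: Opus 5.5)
Your proof is correct and follows essentially the same route as the paper: you rewrite a coincidence $b-a=d-c$ as $b+c=a+d$ and check that the two representations are different, which contradicts the Sidon property and in fact shows all six differences are distinct. The only difference is that the paper does not split into cases. It observes once that the unordered pairs $\{b,c\}$ and $\{a,d\}$, with repeated elements allowed, must differ, and this automatically covers your shared-endpoint ($3$-term progression) case because the Sidon definition permits $x=y$.
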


\begin{proof}
Let \(S\subset\mathbb R\) be a Sidon set, and let \(x_1,x_2,x_3,x_4\in S\) be four distinct elements.
We claim that the six absolute differences \(|x_i-x_j|\) \((1\le i<j\le 4)\) are all distinct.

Suppose for contradiction that \(|x_i-x_j|=|x_k-x_\ell|\) for two distinct unordered pairs \(\{i,j\}\ne\{k,\ell\}\).
Swapping indices within each pair if necessary, we may assume \(x_i>x_j\) and \(x_k>x_\ell\), so that
\(x_i-x_j=x_k-x_\ell\). Rearranging gives
\[
x_i+x_\ell=x_k+x_j.
\]
The unordered pairs \(\{x_i,x_\ell\}\) and \(\{x_k,x_j\}\) are distinct (otherwise one would have
\(\{x_i,x_j\}=\{x_k,x_\ell\}\), contradicting our choice of pairs), so this equality gives two different
representations of the same sum using elements of \(S\), contradicting that \(S\) is Sidon.
This shows that all six absolute differences of any four distinct elements in $S$ are distinct.
So $S$ is a \((4,5)\)-set.
\end{proof}

\begin{proposition}\label{prop:45_implies_weak}
Every $(4,5)$-set is a weak Sidon set.
\end{proposition}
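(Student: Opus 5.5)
I argue by contraposition: if $A$ is not weak Sidon, I exhibit four distinct elements of $A$ whose six absolute differences take at most four distinct values. Suppose there are two distinct unordered pairs $\{x,y\}\neq\{z,w\}$ with $x<y$, $z<w$, all in $A$, and $x+y=z+w$.

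The first step is to show that the four elements $x,y,z,w$ are pairwise distinct. Since $x\neq y$ and $z\neq w$, any coincidence must be between the two pairs. If, say, $x=z$, then $y=w$ and the pairs coincide, which is excluded. If $x=w$, then $y=z$, and again the pairs coincide. The cases $y=z$ and $y=w$ are symmetric. Hence $x,y,z,w$ are four distinct elements of $A$.

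Next, I rewrite the equal sums as equal differences. Without loss of generality $x<z$; then $w=x+y-z<y$. Using $z<w$, the order is $x<z<w<y$. From $x+y=z+w$ we get
\[
z-x=y-w \qquad\text{and}\qquad w-x=y-z .
\]
These are two coincidences among the six absolute differences of $\{x,y,z,w\}$. The unordered pairs involved are $\{x,z\},\{y,w\}$ and $\{x,w\},\{y,z\}$, which are four distinct pairs. Two disjoint coincidences among six values leave at most $6-2=4$ distinct values. This contradicts the $(4,5)$ condition, so $A$ is weak Sidon.

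The only point needing care is checking that the two coincidences really are between different pairs of pairs, so that they do not collapse into one. This holds because the four pairs listed above are distinct. Strictness of the inclusion would be handled separately, by an explicit example such as $\{0,1,2,4\}$. This set is weak Sidon, since its pairwise sums $1,2,4,3,5,6$ are distinct. It is not $(4,5)$, since its differences are $1,2,4,1,3,2$, giving only $4$ distinct values.
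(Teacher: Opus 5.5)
Your proof is correct and follows the same approach as the paper. Both order the four elements (your $x<z<w<y$), turn $x+y=z+w$ into the two coincidences $z-x=y-w$ and $w-x=y-z$, and conclude there are at most four distinct differences; your explicit check that the two pairs share no element is a step the paper leaves implicit, and your example $\{0,1,2,4\}$ is a translate of the paper's $\{1,2,3,5\}$.
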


\begin{proof}
Let $A\subset \mathbb{R}$ be a $(4,5)$-set. Suppose, for a contradiction, that $A$ is not weak Sidon.
Then there exist four distinct elements $a_1,a_2,a_3,a_4\in A$ such that
\[
a_1+a_4=a_2+a_3.
\]
By rearranging if necessary, we may assume $a_1<a_2<a_3<a_4$. Then the above equality implies \(a_4-a_3=a_2-a_1\), so among the six pairwise absolute differences of $\{a_1,a_2,a_3,a_4\}$, we have
\[
|a_4-a_3|=|a_2-a_1| \qquad \text{and}\qquad |a_4-a_2|=|a_3-a_1|.
\]
Thus at least two distinct values among the six differences are repeated, so there are at most four distinct
absolute differences. This contradicts the $(4,5)$-property.
\end{proof}

We remark that these two inclusions are strict. 
For example, the set \(\{1,2,3,5\}\) is weak Sidon (its six sums \(x+y\) with \(x<y\) are \(3,4,5,6,7,8\)), but it is not a \((4,5)\)-set since its six pairwise absolute differences take only four distinct values.
On the other hand, \(\{1,2,3,6\}\) is a \((4,5)\)-set, but it is not Sidon since \(1+3=2+2\).

\subsection{A.P.-hypergraphs}\label{subsec:ap-hypergraphs}

We use the A.P.-hypergraph framework of Gy\'arf\'as and Lehel~\cite{GyLe95}. For a finite set
\(A\subset \mathbb{R}\), let \(H(A)\) be its A.P.-hypergraph (Definition~\ref{defn:aphyper_v1}).
Throughout this paper, for a hypergraph \(H\) we write $n_H:=|V(H)|$ and $m_H:=|E(H)|$, and let \(\alpha(H)\) and \(\tau(H)\) denote the independence number and transversal number of \(H\), respectively.

The next three lemmas are stated in \cite[p.~109]{GyLe95} without proof. For completeness, we include full proofs.
Moreover, we weaken the hypotheses in two of them: in fact, Lemmas~\ref{lem:sidon-iff-apfree} and~\ref{lem:mleqn}
hold for all weak Sidon sets, not only for $(4,5)$-sets.

For the first lemma, the key observation is that, under the weak Sidon condition, the Sidon subsets are exactly those subsets that contain no $3$-term arithmetic progression.

\begin{lemma}\label{lem:sidon-iff-apfree}
Let \(A\) be a weak Sidon set, and let \(H=H(A)\) be its A.P.-hypergraph. For a subset \(S\subseteq A\), the following are equivalent:
\begin{enumerate}
\item \(S\) is Sidon;
\item \(S\) contains no \(3\)-term arithmetic progression.
\end{enumerate}
In particular,
\[
h(A)=\alpha(H).
\]
\end{lemma}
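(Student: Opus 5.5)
The plan is to prove the two implications separately and then read off $h(A)=\alpha(H)$ from the definitions.

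For (1)$\Rightarrow$(2), I argue directly. Suppose $S$ contains a $3$-term arithmetic progression $x<y<z$. Then $x+z=y+y$. The unordered pairs $\{x,z\}$ and $\{y,y\}$ are distinct, so this gives two distinct representations of the same sum with $x\le z$ and $y\le y$. That contradicts $S$ being Sidon. This direction does not use the weak Sidon hypothesis on $A$.

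For (2)$\Rightarrow$(1), I prove the contrapositive: if $S\subseteq A$ is not Sidon, then $S$ contains a $3$-term arithmetic progression.
\begin{itemize}
\item If $S$ is not Sidon, there are distinct unordered pairs $\{x,y\}\neq\{u,v\}$ from $S$ (repetition allowed) with $x+y=u+v$.
\item Suppose both pairs consist of distinct elements, so $x\ne y$ and $u\ne v$. Then these are two representations with strict inequality, and they are distinct. Since $S\subseteq A$ and $A$ is weak Sidon, this is impossible.
\item So at least one pair is a repeated element, say $u=v$, giving $x+y=2u$.
\item If also $x=y$, then $2x=2u$, so $x=u$ and the pairs coincide, a contradiction. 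Hence $x\ne y$.
\item Then $u=(x+y)/2$ lies strictly between $x$ and $y$, so $\{x,u,y\}\subseteq S$ is a $3$-term arithmetic progression.
\end{itemize}

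The main subtlety is the case analysis on whether repeated elements occur. The weak Sidon hypothesis is exactly what excludes the case where both representations use distinct elements. One should also check that the four elements in that case need not be distinct. If the pairs share an element, say $x=u$, then $y=v$ and the pairs coincide. So distinct pairs with distinct entries and equal sums automatically involve four distinct elements, which is the setting of the weak Sidon condition.

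Finally, for $h(A)=\alpha(H)$: a subset $S\subseteq A$ is independent in $H=H(A)$ if and only if it contains no edge. By Definition~\ref{defn:aphyper_v1}, this means $S$ contains no $3$-term arithmetic progression, because every $3$-AP in $S$ is a triple of elements of $A$ and hence an edge of $H$. By the equivalence just proved, the independent sets of $H$ are exactly the Sidon subsets of $A$. Taking maximum cardinalities gives $h(A)=\alpha(H)$.
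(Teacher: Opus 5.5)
Your proof is correct and takes essentially the same route as the paper. The easy direction uses $(x-d)+(x+d)=x+x$, and the converse is an elementary case analysis in which the weak Sidon hypothesis rules out two distinct representations with distinct summands; you split cases by whether a representation uses a repeated element, whereas the paper splits by whether $\{u,v,x,y\}$ has four, two, or three distinct elements, and this is only a difference in organization.
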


\begin{proof}
If \(S\) contains a \(3\)-term arithmetic progression \(\{x-d,x,x+d\}\) with \(d>0\), then
\[
(x-d)+(x+d)=x+x,
\]
so \(S\) is not Sidon.
Conversely, suppose that \(S\) is not Sidon. Then there exist \(u,v,x,y\in S\) such that
\(u+v=x+y\),
but the two representations are different. Reordering within each pair if necessary, we may assume
\(u\le v, x\le y\) with \( (u,v)\neq (x,y)\).

We first claim that \(\{u,v,x,y\}\) cannot consist of four distinct elements. Indeed, if it did, write them as
\(a<b<c<d\).
Since \(a\) is the smallest and \(d\) is the largest, the only way to split these four distinct numbers into two pairs with equal sum is
\(a+d=b+c\),
which contradicts the fact that \(A\) is weak Sidon.

Hence \(\{u,v,x,y\}\) has at most three distinct elements. It cannot have only two distinct elements either. Indeed, if the two values are \(a<b\), then the only possibilities with nondecreasing order are
\((a,a), (a,b)\) and \( (b,b),\)
whose sums are \(2a\), \(a+b\), and \(2b\), respectively, and these are pairwise distinct. Thus \(u+v=x+y\) would force \((u,v)=(x,y)\), contrary to our choice of the two representations.
Therefore \(\{u,v,x,y\}\) has exactly three distinct elements, say \(a<b<c\). It follows that the two representations must be
\[
a+c=b+b,
\]
so \(a,b,c\) form a \(3\)-term arithmetic progression.
We have proved that \(S\) is Sidon if and only if \(S\) contains no \(3\)-term arithmetic progression.

Finally, a subset of \(V(H)\) is independent in \(H\) if and only if it contains no edge of \(H\), i.e., no \(3\)-term arithmetic progression from \(A\). Therefore \(h(A)=\alpha(H)\).
\end{proof}

The following lemma on weak Sidon sets will be used frequently in the sequel.

\begin{lemma}\label{lem:mleqn}
Let \(A\) be a weak Sidon set with \(|A|\ge 2\), and let \(H=H(A)\).
For each edge \(e=\{x-d,x,x+d\}\in E(H)\) with \(d>0\), define its midpoint by \(\mu(e):=x\).
Then the map \(\mu:E(H)\to A\) is injective. In particular,
\[
m_H \le n_H - 2.
\]
\end{lemma}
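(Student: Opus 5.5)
The plan is to prove injectivity of $\mu$ directly from the weak Sidon condition, and then obtain the bound $m_H\le n_H-2$ by locating two elements of $A$ that can never be midpoints.

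\emph{Injectivity.} Suppose two distinct edges share the same midpoint: $e=\{x-d,x,x+d\}$ and $e'=\{x-d',x,x+d'\}$ with $d,d'>0$. Since $e\neq e'$, we have $d\neq d'$, and the four numbers $x-d,\,x+d,\,x-d',\,x+d'$ are pairwise distinct. Indeed:
\begin{itemize}
\item $x-d\neq x+d'$ and $x+d\neq x-d'$, because both $d$ and $d'$ are positive;
\item $x-d\neq x-d'$ and $x+d\neq x+d'$, because $d\neq d'$.
\end{itemize}
All four numbers lie in $A$, and
\[
(x-d)+(x+d)=2x=(x-d')+(x+d').
\]
This gives two sums of pairs of distinct elements of $A$, taken from two different unordered pairs, with the same value. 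That contradicts the weak Sidon property, so $\mu$ is injective.

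\emph{Counting.} The midpoint of an edge is strictly between its other two elements. Hence it can be neither $\min A$ nor $\max A$. Since $|A|\ge 2$, these are two distinct elements, so $\mu$ maps $E(H)$ injectively into $A\setminus\{\min A,\max A\}$. This set has size $n_H-2$, which gives $m_H\le n_H-2$.

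There is no real obstacle here. The only point needing care is the distinctness of the four elements, which is what the weak Sidon condition (distinct summands) requires.
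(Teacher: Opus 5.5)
Your proposal is correct and follows essentially the same route as the paper: injectivity of $\mu$ comes from the two equal sums $(x-d)+(x+d)=(x-d')+(x+d')=2x$ contradicting the weak Sidon property, and the bound follows because $\min A$ and $\max A$ can never be midpoints. Your explicit check that the four endpoints are pairwise distinct is a detail the paper only asserts, so nothing is missing.
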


\begin{proof}
Write \(A=\{a_1,\dots,a_n\}\) with \(a_1<\cdots<a_n\), so \(n=n_H\geq 2\).

We first show that the map \(\mu:E(H)\to A\) is injective. Suppose for a contradiction that
there exist two distinct edges \(e_1\) and \(e_2\) with the same midpoint \(x\). In that case, let
\[
e_1=\{x-d_1,x,x+d_1\},\qquad e_2=\{x-d_2,x,x+d_2\}
\]
for some distinct \(d_1,d_2>0\), and hence
\[
(x-d_1)+(x+d_1)=(x-d_2)+(x+d_2)=2x.
\]
This gives two distinct representations of the same sum using four distinct elements of \(A\), contradicting that $A$ is weak Sidon.

Hence, the map \(\mu:E(H)\to A\) is injective. Also, \(\mu(e)\notin\{a_1,a_n\}\) for every \(e\in E(H)\), since the midpoint
of a nontrivial \(3\)-term arithmetic progression lies strictly between its endpoints. Hence \(\mu(E(H))\subseteq A\setminus\{a_1,a_n\}\). Therefore
\[
m_H=|E(H)|=|\mu(E(H))|\le |A|-2=n_H-2.
\qedhere\]
\end{proof}

As we mentioned earlier, the condition of being a $(4,5)$-set imposes strong structural restrictions on \(H(A)\). We say that a hypergraph is
\emph{linear} if any two distinct edges intersect in at most one vertex.

\begin{lemma}\label{lem:linear}
If \(A\) is a \((4,5)\)-set, then its A.P.-hypergraph \(H(A)\) is linear.
\end{lemma}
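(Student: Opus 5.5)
We argue by contradiction: assume two distinct edges $e_1,e_2\in E(H(A))$ share at least two vertices, and show that the four elements of $e_1\cup e_2$ violate the $(4,5)$-condition. Two distinct $3$-element sets sharing two vertices cannot share all three, so $|e_1\cap e_2|=2$ and $e_1\cup e_2$ has exactly four distinct elements of $A$. The plan is to list the relative positions of the shared pair inside each progression and, in every case, exhibit two coincidences among the six pairwise absolute differences of $e_1\cup e_2$. That leaves at most four distinct values, contradicting the definition of a $(4,5)$-set.

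Write the shared pair as $p<q$ and set $\delta=q-p>0$. In a $3$-term progression $\{y-d,y,y+d\}$, the pair $\{p,q\}$ is either two consecutive terms, giving $d=\delta$, or the two endpoints, giving $d=\delta/2$. If $\{p,q\}$ were the endpoints of both $e_1$ and $e_2$, both third vertices would equal $(p+q)/2$, so $e_1=e_2$. If $\{p,q\}$ were consecutive in both, each third vertex would be $p-\delta$ or $q+\delta$, and distinctness of the edges forces the four points $p-\delta,p,q,q+\delta$. This is a $4$-term arithmetic progression, with differences $\delta,\delta,\delta,2\delta,2\delta,3\delta$, so only three distinct values occur. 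Since $A$ is in particular weak Sidon, one could also cite Lemma~\ref{lem:mleqn}; however, the difference count is direct.

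The remaining case is that $\{p,q\}$ is consecutive in one edge and consists of the endpoints of the other. Up to reflection $x\mapsto -x$, which preserves both hypotheses, the four points are $p,\ (p+q)/2,\ q,\ q+\delta$. Rescale so that $p=0$ and $\delta=2$; the points become $0,1,2,4$. Their differences are $1,2,4,1,3,2$, which give only the four values $\{1,2,3,4\}$. This again contradicts the $(4,5)$-property.

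The main obstacle is the case enumeration, namely making sure every configuration is covered and none is double-counted. Once the dichotomy above is fixed (consecutive versus endpoints, in each of the two edges), each case reduces to a short computation, so I expect no genuine difficulty.
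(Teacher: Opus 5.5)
Your proposal is correct and follows essentially the same route as the paper. The shared pair lies in at most three possible $3$-term progressions, and two distinct ones yield either a $4$-term progression or, up to reflection and an affine map, the configuration $\{0,1,2,4\}$; each has at most four distinct differences.

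One small quibble concerns your aside: Lemma~\ref{lem:mleqn} would not dispose of the $4$-term progression case, because the two edges there have distinct midpoints $p$ and $q$. What rules that case out is the weak Sidon relation $(p-\delta)+(q+\delta)=p+q$. You do not rely on the aside, so it is harmless, and the paper's own appeal to that lemma is likewise not actually needed.
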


\begin{proof}
Suppose, for a contradiction, that \(H(A)\) is not linear. Then there exist two distinct edges
\(e_1,e_2\) with \(|e_1\cap e_2|\ge 2\).

By Proposition~\ref{prop:45_implies_weak} and Lemma~\ref{lem:mleqn}, we conclude that \(e_1\) and \(e_2\) do not share their midpoint. Let the two common vertices be \(u<v\), and set \(d:=v-u>0\).
Any \(3\)-term arithmetic progression containing both \(u\) and \(v\) must be one of the triples
\[
\{u-d,u,v\},\qquad \left\{u,\frac{u+v}{2},v\right\},\qquad \{u,v,v+d\},
\]
according to whether the midpoint is \(u\), \(\frac{u+v}{2}\), or \(v\).
Since \(e_1\neq e_2\), their third vertices are two distinct choices among \(u-d,\frac{u+v}{2},v+d\),
so \(e_1\cup e_2\) consists of four points.

If the third vertices are \(u-d\) and \(v+d\), then the four points form a \(4\)-term arithmetic progression
\(\{u-d,u,v,v+d\}\), which determines only three distinct pairwise absolute differences.

Otherwise, one of the edges is \(\{u-d,u,v\}\) or \(\{u,v,v+d\}\) and the other is
\(\left\{u,\frac{u+v}{2},v\right\}\). In either case, among the six pairwise absolute differences we have
\[
|u-(u-d)|=|v-(v+d)|=|v-u|=d
\qquad\text{and}\qquad
\left|\frac{u+v}{2}-u\right|=\left|v-\frac{u+v}{2}\right|=\frac d2,
\]
so there are at most four distinct values. This contradicts the fact that $A$ is a $(4,5)$-set.
Hence \(H(A)\) is linear.
\end{proof}

We point out that the hypothesis in Lemma~\ref{lem:linear} cannot be weakened from \((4,5)\)-sets to weak Sidon sets. In fact, the A.P.-hypergraph of a weak Sidon set need not be linear. For example, let \(A=\{1,2,3,5\}\). Then \(A\) is a weak Sidon set. However, \(H(A)\) contains the two edges
\[
\{1,2,3\}\qquad\text{and}\qquad \{1,3,5\},
\]
and these intersect in the two vertices \(\{1,3\}\). Thus \(H(A)\) is not linear.

\section{Existence of the asymptotic constants}\label{sec:existence_of_limit}

In this section we prove the existence of the limits defining $\gamma_*$ and $c_*$
by establishing the first assertion of Theorem~\ref{thm:limit_exists_v2} and Theorem~\ref{thm:limit_exists_v1}.
We will use the standard Fekete's lemma for subadditive sequences.

\begin{lemma}[Fekete's lemma]\label{lem:fekete}
Let $(a_k)_{k\ge 1}$ be a sequence of real numbers such that $a_{m+n}\le a_m+a_n$ for all $m,n\ge 1$.
Then the limit $\lim_{k\to\infty} a_k/k$ exists and equals $\inf_{k\ge 1} a_k/k$.
\end{lemma}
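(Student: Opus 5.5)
We follow the standard division-with-remainder argument. Set $L:=\inf_{k\ge 1} a_k/k\in[-\infty,\infty)$. Since $a_k/k\ge L$ for every $k$, we trivially have $\liminf_{k\to\infty} a_k/k\ge L$. It therefore suffices to prove that $\limsup_{k\to\infty} a_k/k\le a_m/m$ for every fixed $m\ge 1$. Taking the infimum over $m$ then gives $\limsup \le L\le \liminf$, so the limit exists and equals $L$. This includes the case $L=-\infty$, where we read the statement as the limit being $-\infty$.

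Fix $m\ge 1$. For each $k>m$, write $k=qm+r$ with integers $q\ge 1$ and $r\in\{1,\dots,m\}$. We choose $r\ge 1$ rather than $r\ge 0$ so that $a_r$ is defined without introducing an $a_0$. Applying subadditivity repeatedly, by induction on $q$, gives
\[
a_k\le q\,a_m+a_r\le q\,a_m+M_m,\qquad M_m:=\max_{1\le r\le m} a_r .
\]
Dividing by $k$ yields
\[
\frac{a_k}{k}\le \frac{qm}{k}\cdot\frac{a_m}{m}+\frac{M_m}{k}.
\]
As $k\to\infty$ with $m$ fixed, we have $qm/k=(k-r)/k\to 1$ and $M_m/k\to 0$. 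Hence $\limsup_{k} a_k/k\le a_m/m$, as required.

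The only point that needs a little care is the sign of $a_m$ when passing to the limit in the factor $qm/k$. Since $qm/k\to 1$ exactly, the product $\frac{qm}{k}\cdot\frac{a_m}{m}$ converges to $a_m/m$ whatever the sign of $a_m$, so no case split is needed. Apart from this the argument is routine, and there is no substantial obstacle.
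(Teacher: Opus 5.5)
Your proof is correct. It is the classical division-with-remainder argument: you take the remainder in $\{1,\dots,m\}$ to avoid needing $a_0$, and you correctly allow for $L=-\infty$. The paper gives no proof of this lemma and simply quotes it as standard, so your argument is exactly the classical proof one would supply; in the paper's applications ($a_k=g(k)$ or $f(k)$, both positive) one has $L\ge 0$ finite anyway.
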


\subsection{Weak Sidon sets: existence of the limit}

We first treat the weak Sidon set problem and prove the first assertion of Theorem~\ref{thm:limit_exists_v2}.
The following lemma is straightforward. 

\begin{lemma}\label{lem:wsidon_affine}
If $A\subset\mathbb{R}$ is a weak Sidon set, then $qA+t:=\{qa+t:\ a\in A\}$ is a weak Sidon set for every $q> 0$ and $t\in\mathbb{R}$.
Moreover, $h(qA+t)=h(A)$.
\end{lemma}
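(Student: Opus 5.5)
The statement has two parts, and I would check them directly from the definitions. For the weak Sidon property, take distinct $x,y,z,w\in A$ with $x<y$, $z<w$ and $\{x,y\}\neq\{z,w\}$, and suppose $(qx+t)+(qy+t)=(qz+t)+(qw+t)$. Subtracting $2t$ and dividing by $q>0$ gives $x+y=z+w$. I also need that the map $\phi(a)=qa+t$ is a strictly increasing bijection from $A$ onto $qA+t$, which holds because $q>0$. So a pair $u<v$ in $qA+t$ pulls back to a pair $\phi^{-1}(u)<\phi^{-1}(v)$ in $A$, and distinct pairs pull back to distinct pairs. Hence any coincidence of sums of distinct pairs in $qA+t$ would give one in $A$, which contradicts $A$ being weak Sidon.

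For $h(qA+t)=h(A)$, I would show that $\phi$ carries Sidon subsets of $A$ exactly onto Sidon subsets of $qA+t$. For any $S\subseteq A$ and $x\le y$, $z\le w$ in $S$, we have $\phi(x)+\phi(y)=\phi(z)+\phi(w)$ if and only if $x+y=z+w$. Since $\phi$ preserves order, the constraints $x\le y$ correspond. Therefore $S$ is Sidon if and only if $\phi(S)$ is Sidon. Since $\phi$ is a bijection that preserves cardinality, the maximum sizes agree.

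An alternative route uses Lemma~\ref{lem:sidon-iff-apfree}: $\phi$ maps $3$-term arithmetic progressions to $3$-term arithmetic progressions in both directions, so $H(qA+t)\cong H(A)$ and the independence numbers coincide. The direct argument is simpler, and I would use it.

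There is no real obstacle here. The only point needing care is that $q>0$ is used, through order preservation, to match the conditions $x<y$ and $x\le y$, and nonzero $q$ is used for the cancellation. The whole proof is routine bookkeeping.
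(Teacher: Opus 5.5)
Your proposal is correct and follows essentially the same approach as the paper. Both rest on the same fact: $x\mapsto qx+t$ with $q>0$ is an order-preserving bijection under which $x+y=z+w$ holds if and only if the images satisfy the same relation, so the weak Sidon and Sidon properties transfer in both directions and $h(qA+t)=h(A)$.
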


\begin{proof}
The map $x\mapsto qx+t$ is a bijection on~$\mathbb{R}$ that preserves all additive relations of the form $x+y=u+v$ (up to scaling and translation).
In particular, the sums of distinct pairs in \(A\) are distinct if and only if the corresponding sums in \(qA+t\) are distinct. Hence, the weak Sidon property is preserved.
The statement $h(qA+t)=h(A)$ follows similarly, since the Sidon property is also invariant under affine transformations.
\end{proof}

Now we establish the key technical lemma needed in the proof of Theorem~\ref{thm:limit_exists_v2}.

\begin{lemma}\label{lem:wsidon_concat}
Let $A,B\subset\mathbb{R}$ be two finite weak Sidon sets. Then there exist $q>0$ and $t\in\mathbb{R}$
such that the set
\[
C := A\cup(qB+t)
\]
is weak Sidon, satisfies $|C|=|A|+|B|$, and \(h(C) \le h(A)+h(B)\).
\end{lemma}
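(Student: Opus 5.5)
Use Lemma~\ref{lem:wsidon_affine} to normalize. Translate $A$ so that $A\subset[0,1]$, translate $B$ so that $B\subset[0,1]$, and put $B':=qB+t$ with $t$ very large (and, say, $q=1$). I would take $t>10$, and more carefully $t$ large compared to the diameters, so that $B'\subset[t,t+1]$ lies far to the right of $A$. By Lemma~\ref{lem:wsidon_affine}, $B'$ is weak Sidon with $h(B')=h(B)$. Also $A\cap B'=\emptyset$, so $|C|=|A|+|B|$.

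The main step is to check that $C$ is weak Sidon. Suppose $x+y=u+v$ with $x<y$, $u<v$ in $C$ and $\{x,y\}\neq\{u,v\}$. Classify each pair by how many of its elements lie in $B'$. The sums of pairs with $0$, $1$, $2$ elements in $B'$ lie in $[0,2]$, $[t,t+2]$ and $[2t,2t+2]$ respectively. For $t>2$ these intervals are disjoint, so both pairs have the same type. Type $0$ is excluded because $A$ is weak Sidon, and type $2$ because $B'$ is weak Sidon.

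The only real obstacle is type $1$: $a+b=a'+b'$ with $a,a'\in A$ and $b,b'\in B'$. This says $a-a'=b'-b$, i.e.\ a nonzero difference of $A$ coincides with a difference of $B'$. With $q=1$ this can happen, so instead I would exploit the freedom in $q$. The difference sets $D_A=(A-A)\setminus\{0\}$ and $D_B=(B-B)\setminus\{0\}$ are finite. I would choose $q>0$ outside the finite set $\{d/d' : d\in D_A,\ d'\in D_B\}$, so that $D_A\cap qD_B=\emptyset$. Then $a=a'$ and $b=b'$ are forced, contradicting distinctness of the pairs. After choosing $q$, choose $t$ large enough (exceeding $2\max|A|+2q\,\max|B|$, say) that the three sum ranges separate.

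For the bound $h(C)\le h(A)+h(B)$, let $S\subseteq C$ be Sidon. Then $S\cap A$ is a Sidon subset of $A$, and $S\cap B'$ is a Sidon subset of $B'$, since subsets of Sidon sets are Sidon. Hence $|S|\le h(A)+h(B')=h(A)+h(B)$. No analysis of cross terms is needed here, since we only need an upper bound.
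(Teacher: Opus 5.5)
Your proposal is correct and matches the paper's proof: normalize, choose $q>0$ outside the finite set of ratios of nonzero differences so that mixed sums are pairwise distinct, take $t$ large so the three kinds of sums occupy disjoint ranges, and bound $h(C)$ by intersecting a Sidon subset with the two blocks. Your uniform treatment, where $D_A$ or $D_B$ may be empty, also makes the paper's separate case $|A|\le 1$ or $|B|\le 1$ unnecessary. Note that fitting $A$ into $[0,1]$ needs a scaling as well as a translation; this is harmless, since by Lemma~\ref{lem:wsidon_affine} the affine map on $A$ can be undone at the end to return to the form $A\cup(q'B+t')$.
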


\begin{proof}
By Lemma~\ref{lem:wsidon_affine}, we may translate \(A\) and \(B\) so that \(\min A=\min B=0\).
Write \(D_A:=\max A\) and \(D_B:=\max B\).
If \(|A|\le 1\) or \(|B|\le 1\), say \(|A|\le 1\), fix any \(q>0\) and choose \(t\) such that \(D_A<t\) and \(t> D_A+qD_B\).
Then \(A\) lies strictly to the left of \(qB+t\), all mixed sums of the form \(t+(a+qb)\) with \(a\in A\) and \(b\in B\) are pairwise distinct, and each of them is strictly smaller than any sum within \(qB+t\).
It follows that \(C\) is weak Sidon with \(|C|=|A|+|B|\).
Moreover, any Sidon set \(S\subseteq C\) satisfies \(|S\cap A|\le h(A)\) and \(|S\cap(qB+t)|\le h(B)\),
and hence \(h(C)\le h(A)+h(B)\).
Therefore, we may assume \(|A|,|B|\ge 2\).

Consider the finite set
\[
\mathcal R:=\left\{\frac{a-a'}{b-b'}:\ a,a'\in A,\ b,b'\in B,\ b\neq b'\right\}\subset\mathbb{R}.
\]
Choose \(q>0\) with \(q\notin \mathcal R\), and then choose a large integer \(t\) so that
\begin{equation}\label{eq:t-large-wsidon}
t-D_A>\max\{D_A,qD_B\}.
\end{equation}
Define \(C:=A\cup(qB+t)\); we refer to either \( A \) or \( qB+t \) as a \emph{block}.
Condition~\eqref{eq:t-large-wsidon} easily implies that \(A\) lies strictly to the left of \(qB+t\),
so the union is disjoint and \(|C|=|A|+|B|\).

For later use, we call \(x+y\) a \emph{mixed sum} if \(x\in A\) and \(y\in qB+t\).
Since \(A\) lies strictly to the left of \(qB+t\), every mixed sum has the form
\[
a+(qb+t)=t+(a+qb)\qquad \mbox{ for some } a\in A \mbox{ and } b\in B.
\]
Since $\min A=\min B=0$, we have $\min\{a+qb:\ a\in A,\ b\in B\}=0$.
By~\eqref{eq:t-large-wsidon}, we have
$\max\{a+qb:\ a\in A,\ b\in B\}\le D_A+qD_B<t$ and
$\max\{a+a':a<a'\}\le 2D_A<t$.
Hence,
\[
t+\min\{a+qb:\ a\in A,\ b\in B\}
>
\max\{a+a':\ a<a',\ a,a'\in A\},
\]
and
\[
2t+\min\{q(b+b'):\ b<b',\ b,b'\in B\}
>
t+\max\{a+qb:\ a\in A,\ b\in B\}.
\]
This shows that every mixed sum is strictly larger than every sum of two distinct elements of \(A\), and strictly smaller than every sum
of two distinct elements of \(qB+t\). 
In particular, sums of distinct elements of $C$ coming from two different ``types'' of pairs cannot coincide.

It remains to rule out collisions between two mixed sums. Suppose
\[
a_1+(qb_1+t)=a_2+(qb_2+t).
\]
Then \(q(b_1-b_2)=a_2-a_1\). Since \(q\notin\mathcal R\), this forces \(b_1=b_2\) and \(a_1=a_2\).
Hence all mixed sums are pairwise distinct. Together with the fact that \(A\) and \(qB+t\) are weak Sidon,
it follows that all sums \(x+y\) with \(x<y\) in \(C\) are distinct, i.e.\ \(C\) is weak Sidon.

Finally, let \(S\subseteq C\) be Sidon. Then \(S\cap A\) and \(S\cap(qB+t)\) are Sidon subsets of the two blocks, and therefore
\[
|S|=|S\cap A|+|S\cap(qB+t)|
\le h(A)+h(qB+t)
= h(A)+h(B),
\]
where the last equality follows from Lemma~\ref{lem:wsidon_affine}. Taking the maximum over all Sidon subsets \(S\subseteq C\)
gives \(h(C)\le h(A)+h(B)\).
\end{proof}

With this lemma, one can promptly derive the following result.

\begin{proposition}\label{prop:g_subadditive}
For all $m,n\ge 1$ one has $g(m+n)\le g(m)+g(n)$.
\end{proposition}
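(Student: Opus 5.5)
The plan is to apply Lemma~\ref{lem:wsidon_concat} directly to two extremal sets. Fix $m,n\ge 1$. By definition of $g$, the minimum over a nonempty family is attained. The family is nonempty because, for instance, any $n$-element Sidon set (such as $\{2^i: 0\le i<n\}$) is weak Sidon. The values $h(A)$ are integers between $1$ and $n$, so the minimum exists. So choose a weak Sidon set $A$ with $|A|=m$ and $h(A)=g(m)$, and a weak Sidon set $B$ with $|B|=n$ and $h(B)=g(n)$.

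Next, invoke Lemma~\ref{lem:wsidon_concat} to obtain $q>0$ and $t\in\mathbb{R}$ such that $C=A\cup(qB+t)$ has the following properties:
\begin{itemize}
\item $C$ is weak Sidon;
\item $|C|=m+n$;
\item $h(C)\le h(A)+h(B)=g(m)+g(n)$.
\end{itemize}
Since $C$ is admissible in the minimum defining $g(m+n)$, we get $g(m+n)\le h(C)\le g(m)+g(n)$.

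There is essentially no obstacle; all the work sits in Lemma~\ref{lem:wsidon_concat}. The only point needing a remark is that the minimum in the definition of $g$ is attained. We could even avoid this by taking arbitrary admissible $A,B$ and then passing to the infimum, since the resulting inequality $g(m+n)\le h(A)+h(B)$ holds for all admissible $A,B$. By Fekete's lemma (Lemma~\ref{lem:fekete}), this subadditivity then gives the existence of $\lim g(n)/n$.
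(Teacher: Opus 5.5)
Your proposal is correct and follows the paper's proof essentially verbatim: choose extremal weak Sidon sets $A$ and $B$, apply Lemma~\ref{lem:wsidon_concat} to obtain $C$ with $|C|=m+n$ and $h(C)\le g(m)+g(n)$, and conclude by minimality. Your extra remark that the minimum defining $g$ is attained (a nonempty family with integer values of $h$) is a harmless addition that the paper leaves implicit.
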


\begin{proof}
Let $A,B$ be weak Sidon sets with $|A|=m$, $|B|=n$, $h(A)=g(m)$ and $h(B)=g(n)$.
Apply Lemma~\ref{lem:wsidon_concat} to obtain a weak Sidon set $C$ with $|C|=m+n$ and $h(C)\le h(A)+h(B)$. Since \(g(m+n)\) is the minimum of \(h(\cdot)\) over all weak Sidon sets of size \(m+n\), it follows that\[
g(m+n)\le h(C)\le g(m)+g(n).
\qedhere\]
\end{proof}

We can now complete the proof of the existence of the asymptotic constant $\gamma_*$.

\begin{proof}[Proof of the first assertion of Theorem~\ref{thm:limit_exists_v2}]
By Proposition~\ref{prop:g_subadditive}, the sequence $(g(n))_{n\ge 1}$ is subadditive. Lemma~\ref{lem:fekete} then implies that the limit $\lim_{n\to\infty} g(n)/n$ exists.
\end{proof}

\subsection{\texorpdfstring{$(4,5)$-sets: existence of the limit}{(4,5)-sets: existence of the limit}}

We now treat the \((4,5)\)-set setting using an approach analogous to that of the previous subsection.

\begin{lemma}\label{lem:affine}
Let \(A\subset \mathbb{R}\) be finite, \(t\in \mathbb{R}\), and \(q>0\). Set \(qA+t :=\{qa+t:\ a\in A\}\). Then \(A\) is a \((4,5)\)-set if and only if \(qA+t\) is a \((4,5)\)-set, and \(h(qA+t)=h(A)\).
\end{lemma}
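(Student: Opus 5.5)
The plan is to check directly that the affine map \(\phi(x)=qx+t\) with \(q>0\) scales every absolute difference by the same positive factor and preserves every additive relation among four elements. Both claims then follow from the definitions. First, \(\phi\) is a bijection \(\mathbb{R}\to\mathbb{R}\), so it restricts to a bijection \(A\to qA+t\) and sends distinct elements to distinct elements. In particular, \(4\)-element subsets of \(A\) correspond bijectively to \(4\)-element subsets of \(qA+t\).

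For the \((4,5)\)-property, fix four distinct \(x_1,\dots,x_4\in A\). Then
\[
|\phi(x_i)-\phi(x_j)| = q\,|x_i-x_j|
\qquad (1\le i<j\le 4).
\]
Multiplication by \(q>0\) is injective on \(\mathbb{R}\), so the number of distinct values among the six differences is the same for \(\{x_i\}\) and for \(\{\phi(x_i)\}\). Since the correspondence of \(4\)-subsets is bijective, every \(4\)-subset of \(A\) has at least five distinct differences if and only if every \(4\)-subset of \(qA+t\) does. This gives the ``if and only if''; the converse direction also follows by applying the forward direction to the inverse map \(x\mapsto (x-t)/q\), which is again affine with positive slope.

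For \(h(qA+t)=h(A)\), I will show that \(S\subseteq A\) is Sidon if and only if \(\phi(S)\) is Sidon. Since \(\phi\) preserves order (\(q>0\)), pairs \(x\le y\) in \(S\) correspond bijectively to pairs \(\phi(x)\le\phi(y)\) in \(\phi(S)\). Moreover, \(\phi(x)+\phi(y)=\phi(u)+\phi(v)\) holds exactly when \(q(x+y)+2t=q(u+v)+2t\), that is, exactly when \(x+y=u+v\). Hence distinctness of sums transfers in both directions. As \(S\mapsto\phi(S)\) is a size-preserving bijection between subsets of \(A\) and subsets of \(qA+t\), the maximum sizes of Sidon subsets coincide.

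There is no real obstacle here. The only point to state carefully is the bijectivity of the correspondences (of \(4\)-subsets, and of Sidon subsets), so that the ``min over all'' conditions and the maxima transfer. This is the same argument as Lemma~\ref{lem:wsidon_affine}, with differences in place of sums.
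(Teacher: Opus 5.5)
Your proposal is correct and follows the same route as the paper. The paper likewise observes that the affine map multiplies every absolute difference by $q$, so each $4$-subset keeps its number of distinct differences, and it handles $h(qA+t)=h(A)$ by citing the sum-preservation argument of Lemma~\ref{lem:wsidon_affine}, which you have simply written out in full along with the explicit bijections.
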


\begin{proof}
For any $x,y\in A$ we have $|(qx+t)-(qy+t)|=q|x-y|$, so for any four points of \(A\), the six pairwise absolute differences are all rescaled by the same factor \(q\).
In particular, the number of distinct absolute differences is unchanged. Therefore \(A\) is a \((4,5)\)-set if and only if \(qA+t\) is a \((4,5)\)-set. 
By the same reasoning as in Lemma~\ref{lem:wsidon_affine}, we have \(h(qA+t)=h(A)\).
\end{proof}

In the next lemma,
we combine two \((4,5)\)-sets by placing them on well-separated scales so that mixed differences
cannot create new coincidences.
The proof is analogous to that of Lemma~\ref{lem:wsidon_concat}, with suitable modifications.

\begin{lemma}\label{lem:concat}
Let \(A,B\subset \mathbb{R}\) be two finite \((4,5)\)-sets. Then there exist \(q>0\) and \(t\in\mathbb{R}\)
such that the set
\[
C:=A \cup (qB+t)
\]
is a \((4,5)\)-set, has \(|C|=|A|+|B|\), and satisfies \(h(C)\le h(A)+h(B)\).
\end{lemma}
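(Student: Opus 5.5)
We follow the template of Lemma~\ref{lem:wsidon_concat}. By Lemma~\ref{lem:affine}, translate so that $\min A=\min B=0$, and set $D_A=\max A$, $D_B=\max B$. We first choose $q>0$ avoiding a finite exceptional set, and then choose $t$ very large. The bound $h(C)\le h(A)+h(B)$ and $|C|=|A|+|B|$ follow exactly as before: disjointness holds once $t>D_A$, and a Sidon subset of $C$ restricts to Sidon subsets of $A$ and of $qB+t$, where $h(qB+t)=h(B)$ by Lemma~\ref{lem:affine}. The work is in verifying the $(4,5)$-property for every 4-element subset $Q\subseteq C$. We split into cases by $k:=|Q\cap A|\in\{0,1,2,3,4\}$. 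The cases $k=0$ and $k=4$ are immediate because $A$ and $qB+t$ are $(4,5)$-sets.

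For the mixed cases, note that a difference inside $A$ is at most $D_A$ and a difference inside $qB+t$ is at most $qD_B$. A \emph{cross} difference has the form $t+qb-a$, which lies in $[t-D_A,\,t+qD_B]$. Choosing $t>D_A+\max\{D_A,qD_B\}$ makes every cross difference strictly larger than every internal difference. So only internal--internal and cross--cross coincidences can occur, and it suffices to show that at most one coincidence arises.

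\emph{Case $k=1$ or $k=3$.} Suppose $Q$ has one point $x$ on one side and three points $y_1,y_2,y_3$ on the other. The three cross differences $|x-y_i|$ are distinct because the $y_i$ are distinct. The three internal differences among $y_1,y_2,y_3$ can contain at most one repeated value, namely when they form a 3-term AP. So there are at least $3+2=5$ distinct values.

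\emph{Case $k=2$.} Let $Q=\{a_1,a_2,qb_1+t,qb_2+t\}$ with $a_1<a_2$ and $b_1<b_2$. The internal differences $a_2-a_1$ and $q(b_2-b_1)$ could coincide. To exclude this, take $q\notin\{(a-a')/(b-b'): a,a'\in A,\ b\neq b'\}$, the same set $\mathcal R$ as in Lemma~\ref{lem:wsidon_concat}. The four cross differences are $t+qb_j-a_i$. Any coincidence among them reads $qb_j-a_i=qb_{j'}-a_{i'}$, which means $q(b_j-b_{j'})=a_i-a_{i'}$, and $q\notin\mathcal R$ forces $(i,j)=(i',j')$. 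So all six differences are distinct.

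The only real obstacle is keeping the case analysis exhaustive and checking the threshold on $t$ carefully. The argument needs strict separation between the largest internal difference and the smallest cross difference. This separation is used in both the $k=2$ case and the $k\in\{1,3\}$ case, so it should be stated once at the start.
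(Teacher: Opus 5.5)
Your proof is correct and follows essentially the same route as the paper: the same normalization $\min A=\min B=0$, the same choice of $q\notin\mathcal R$ followed by $t$ with $t-D_A>\max\{D_A,qD_B\}$, and the same case split on $|Q\cap A|$. The only difference is cosmetic and occurs in the case $|Q\cap A|=2$: you note that $q\notin\mathcal R$ also forces $a_2-a_1\neq q(b_2-b_1)$, giving six distinct differences, whereas the paper just counts $4+1=5$ without excluding that coincidence.
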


\begin{proof}
By Lemma~\ref{lem:affine}, we may translate \(A\) and \(B\) so that \(\min A=\min B=0\).
Write \(D_A:=\max A\) and \(D_B:=\max B\).
Consider the finite set
\[
\mathcal R:=\left\{\frac{a-a'}{b-b'}:\ a,a'\in A,\ b,b'\in B,\ b\neq b'\right\}\subset\mathbb{R}.
\]
Choose \(q>0\) with \(q\notin \mathcal R\), and then choose \(t\) large enough so that
\begin{equation}\label{eq:t-large}
t-D_A>\max\{D_A,qD_B\}.
\end{equation}
Define \(C:=A\cup(qB+t)\).
Similarly as before, we refer to either subset \( A \) or \( qB+t \) as a \emph{block}.
Condition~\eqref{eq:t-large} easily implies that \(A\) lies strictly to the left of \(qB+t\),
so the union is disjoint and \(|C|=|A|+|B|\).

For later use, we call \(|x-y|\) a \emph{mixed difference} if \(x\in A\) and \(y\in qB+t\).
Since \(A\) lies strictly to the left of \(qB+t\), every mixed difference is positive and has the form \((qb+t)-a\) with \(a\in A\) and \(b\in B\). By~\eqref{eq:t-large},
\[
(qb+t)-a \ge t-D_A > \max\{D_A,qD_B\}.
\]
On the other hand, every difference inside \(A\) has absolute value at most \(D_A\), and every difference inside \(qB+t\)
has absolute value at most \(qD_B\). It follows that no mixed difference can coincide with a within-block difference.

Moreover, if two mixed differences coincide, say
\[
qb_1+t-a_1=qb_2+t-a_2,
\]
then \(q(b_1-b_2)=a_1-a_2\). Since \(q\notin\mathcal R\), this forces \(b_1=b_2\) and \(a_1=a_2\).
Thus all mixed differences are pairwise distinct.

We now verify that \(C\) is a \((4,5)\)-set. Let \(X\subseteq C\) with \(|X|=4\).
If \(X\) is contained in one block, then \(X\) determines at least five distinct pairwise absolute differences,
because this holds for \(A\) and for \(B\), and the \((4,5)\)-property is preserved by Lemma~\ref{lem:affine}.

Assume therefore that \(X\) meets both blocks. If \(X\) satisfies $|X\cap A|\in \{1,3\}$, then it contributes three mixed differences,
which are pairwise distinct, and the triple in one block contributes at least two distinct
within-block differences. Since mixed differences cannot coincide with within-block differences, \(X\) determines at least
\(3+2=5\) distinct absolute differences. 
Otherwise, we must have \(|X\cap A|=2\). 
Then it contributes four mixed differences, again pairwise distinct, together with
one within-block difference from each side. Even if these two within-block differences coincide, we still obtain at least
\(4+1=5\) distinct absolute differences. Hence every \(4\)-subset of \(C\) determines at least five distinct pairwise
absolute differences, so \(C\) is a \((4,5)\)-set.

Finally, let \(S\subseteq C\) be Sidon. Then \(S\cap A\) and \(S\cap(qB+t)\) are Sidon subsets of the two blocks, and therefore
\[
|S|=|S\cap A|+|S\cap(qB+t)|
\le h(A)+h(qB+t)
= h(A)+h(B),
\]
where the last equality follows from Lemma~\ref{lem:affine}. Taking the maximum over all Sidon subsets \(S\subseteq C\)
gives \(h(C)\le h(A)+h(B)\).
\end{proof}

We are now ready to deduce the key subadditivity property of the extremal function \(f(n)\).
\begin{proposition}\label{prop:subadd}
For all $m,n\ge 1$ one has $f(m+n)\le f(m)+f(n)$.
\end{proposition}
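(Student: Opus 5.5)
The plan is to copy the proof of Proposition~\ref{prop:g_subadditive} essentially word for word, with Lemma~\ref{lem:concat} used in place of Lemma~\ref{lem:wsidon_concat}.

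First we check that $f$ is well defined for every $n\ge 1$, i.e.\ that some $(4,5)$-set of each size exists. Any Sidon set of size $n$ will do, for instance $\{2^i: 1\le i\le n\}$, since it is a $(4,5)$-set by Proposition~\ref{prop:sidon_implies_45}. The minimum defining $f(n)$ is taken over finitely many possible values of $h$ (integers between $1$ and $n$), so it is attained. We may therefore fix $(4,5)$-sets $A$ and $B$ with
\[
|A|=m,\quad h(A)=f(m), \qquad |B|=n,\quad h(B)=f(n).
\]

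Next we apply Lemma~\ref{lem:concat} to $A$ and $B$. This gives $q>0$ and $t\in\mathbb{R}$ such that $C=A\cup(qB+t)$ is a $(4,5)$-set with $|C|=m+n$ and $h(C)\le h(A)+h(B)$. Since $f(m+n)$ is the minimum of $h(\cdot)$ over all $(4,5)$-sets of size $m+n$, and $C$ is one of them, we get
\[
f(m+n)\le h(C)\le h(A)+h(B)=f(m)+f(n).
\]

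There is no real obstacle here. All of the substantive work (keeping the $(4,5)$-property under concatenation and bounding $h(C)$) is already done in Lemma~\ref{lem:concat}. The only point that needs a moment's care is that the minimum defining $f(n)$ is attained, which is the existence observation made above.
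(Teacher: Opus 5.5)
Your proposal is correct and follows the paper's proof exactly: you fix extremal $(4,5)$-sets $A,B$ with $h(A)=f(m)$, $h(B)=f(n)$, apply Lemma~\ref{lem:concat}, and compare with the minimum defining $f(m+n)$. Your extra remark that $f(n)$ is well defined and attained (using the Sidon set $\{2^i\}$ and Proposition~\ref{prop:sidon_implies_45}) is a harmless addition that the paper leaves implicit.
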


\begin{proof}
Fix \(m,n\ge 1\). Choose a \((4,5)\)-set \(A\) with $|A|=m$ and $h(A)=f(m)$. Similarly, choose a \((4,5)\)-set \(B\) with \(|B|=n\) and \(h(B)=f(n)\). By Lemma~\ref{lem:concat}, there exist \(q>0\) and \(t\in\mathbb R\) such that \(C=A\cup(qB+t)\) is a \((4,5)\)-set with \(|C|=m+n\) and \(h(C)\le h(A)+h(B)=f(m)+f(n)\). 
By the definition of $f(m+n)$, we have
\[
f(m+n)\le h(C)\le f(m)+f(n).
\qedhere\]
\end{proof}

We can now complete the proof of the existence of the limit $\lim_{n\to\infty} f(n)/n$.

\begin{proof}[Proof of Theorem~\ref{thm:limit_exists_v1}]
By Proposition~\ref{prop:subadd}, the sequence \((f(n))_{n\ge 1}\) is subadditive.
Hence, Lemma~\ref{lem:fekete} implies that the limit \(\lim_{n\to\infty} f(n)/n\) exists and equals \(\inf_{n\ge 1} f(n)/n\), which is precisely the optimal constant \(c_*\) in Problem~\ref{prob:erdos-757}.
\end{proof}

\section{Sidon subsets in weak Sidon sets}\label{sec:weak_sidon}

In Section~\ref{sec:existence_of_limit} we established the first assertion of Theorem~\ref{thm:limit_exists_v2}, so the
constant $\gamma_*$ is well defined. In this section we prove Theorem~\ref{thm:gamma_star_bounds}
by establishing matching upper and lower bounds.
In particular, this will imply the precise value of \( \gamma_*\) and thus prove the second assertion of Theorem~\ref{thm:limit_exists_v2}.

\subsection{\texorpdfstring{Upper bound on $g(n)$}{Upper bound on g(n)}}

We begin with the explicit definition of the weak Sidon sets required for our construction.
For an integer $n\ge 2$, define
\[
X_i:=2^{\,n-i}3^{\,i} \quad \mbox{for} \quad 0\le i\le n-1,
\qquad 
Y_i:=2^{\,n-i+1}3^{\,i} \quad \mbox{for} \quad 0\le i\le n,
\]
and set
\[
A_{2n+1}:=\{X_0,\dots,X_{n-1}\} \cup \{Y_0,\dots,Y_n\}.
\]
Clearly, we have $|A_{2n+1}|=2n+1$.

First, we need to show that the sets $A_{2n+1}$ are indeed weak Sidon. 
To proceed, we establish the following two lemmas, 
which say that sums in $A_{2n+1}$ are essentially unique.
For $m\in\mathbb{Z}_{>0}$, let $v_3(m)$ denote the largest integer $k\ge 0$ such that $3^k\mid m$.

\begin{lemma}\label{lem:wsidon_recover_minv3}
Let $a,b\in A_{2n+1}$ be distinct, and write $S=a+b$.
Assume that $v_3(a)\neq v_3(b)$. Then the unordered pair $\{a,b\}$ is uniquely determined by $S$.
\end{lemma}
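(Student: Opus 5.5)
Every element of $A_{2n+1}$ has the form $2^{\alpha}3^{i}$ with $\alpha\ge 1$: $X_i=2^{n-i}3^i$ has $\alpha=n-i\ge 1$ and $v_3=i$, while $Y_i=2^{n-i+1}3^i$ has $\alpha=n-i+1\ge 1$ and $v_3=i$. For each $i$ there are at most two elements with $v_3=i$, namely $X_i$ and $Y_i=2X_i$ (only $Y_n$ when $i=n$). The plan is to read off from $S$ first the smaller $3$-adic valuation, then the element carrying it, and finally the partner $S-(\text{that element})$.

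\textbf{Step 1.} Write $a=2^{\alpha}3^{i}$ and $b=2^{\beta}3^{j}$ with $i<j$. Then $v_3(S)=i$, because $S=3^i\bigl(2^\alpha+2^\beta3^{j-i}\bigr)$ and the bracket is $\equiv 2^\alpha\not\equiv 0\pmod 3$. So $S$ determines $i=\min\{v_3(a),v_3(b)\}$. Consequently, if $\{a',b'\}$ is another pair with $v_3(a')\ne v_3(b')$ and $a'+b'=S$, its smaller valuation is also $i$. Say $v_3(a')=i<v_3(b')$; then $a'\in\{X_i,Y_i\}$.

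\textbf{Step 2.} It remains to exclude $a'\ne a$, that is, $\{a,a'\}=\{X_i,Y_i\}$. Assume $a=2^\alpha3^i$ and $a'=2^{\alpha'}3^i$ with $\{\alpha,\alpha'\}=\{n-i,n-i+1\}$. Dividing $S$ by $3^i$ gives $2^\alpha+2^\beta3^{j-i}=2^{\alpha'}+2^{\beta'}3^{j'-i}$ with $j,j'>i$. I would compare both sides modulo $3$. Since $2\equiv -1\pmod 3$, the left side is $\equiv(-1)^\alpha$ and the right side is $\equiv(-1)^{\alpha'}$. As $\alpha$ and $\alpha'$ differ by one, these residues differ, which is a contradiction. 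Hence $a'=a$, and then $b'=S-a=b$. The case $i=n$ cannot occur, since $i<j\le n$.

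The only subtle point is Step 2, separating $X_i$ from $Y_i$. The mod-$3$ parity trick handles it, because every cofactor other than the $3^i$-part carries a positive power of $3$. If this approach failed, I would fall back on the bounds $2^{\beta}3^{j-i}\le 2^{n-j+1}3^{j-i}$ to compare magnitudes, but that should not be needed.
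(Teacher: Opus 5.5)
Your proof is correct and is essentially the paper's argument. The paper also reads off $i=v_3(S)$ and then distinguishes $X_i$ from $Y_i$ via $S\equiv a \pmod{3^{i+1}}$ together with $X_i\not\equiv Y_i \pmod{3^{i+1}}$; this is exactly your mod-$3$ comparison of $2^{n-i}$ and $2^{n-i+1}$ after dividing by $3^i$.
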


\begin{proof}
Let $i:=\min\{v_3(a),v_3(b)\}$. Since $v_3(a)\neq v_3(b)$, we have
\[
v_3(S)=v_3(a+b)=\min\{v_3(a),v_3(b)\}=i.
\]
Thus $i$ is uniquely determined by $S$.

Assume without loss of generality that $v_3(a)=i<v_3(b)$. Then $b\equiv 0\pmod{3^{i+1}}$, whereas
$a\not\equiv 0\pmod{3^{i+1}}$. Hence
\[
S=a+b\equiv a \pmod{3^{i+1}}.
\]
In $A_{2n+1}$ there are exactly two elements of $3$-adic valuation $i$, namely $X_i$ and $Y_i$, and
both are nonzero modulo $3^{i+1}$. Moreover,
\[
X_i\equiv 2^{\,n-i}\cdot 3^i \pmod{3^{i+1}},
\qquad
Y_i\equiv 2^{\,n-i+1}\cdot 3^i \pmod{3^{i+1}},
\]
so $X_i\not\equiv Y_i\pmod{3^{i+1}}$. Since $a\in\{X_i,Y_i\}$ and $X_i\not\equiv Y_i\pmod{3^{i+1}}$, the congruence
$S\equiv a\pmod{3^{i+1}}$ determines $a$ uniquely. Consequently $b=S-a$ is uniquely determined,
and hence the unordered pair $\{a,b\}$ is uniquely determined by $S$.
\end{proof}

\begin{lemma}\label{lem:wsidon_recover_equalv3}
Let $a,b\in A_{2n+1}$ be distinct, and write $S=a+b$.
Assume that $v_3(a)=v_3(b)$. 
Then the unordered pair \(\{a,b\}\) is uniquely determined by \(S\); moreover, there do not exist distinct \(c,d\in A_{2n+1}\) with \(v_3(c)\neq v_3(d)\) such that \(S=c+d\).
\end{lemma}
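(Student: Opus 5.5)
The elements of $A_{2n+1}$ sharing a $3$-adic valuation $i$ are exactly $X_i$ and $Y_i$; this requires $0\le i\le n-1$, because valuation $n$ is taken only by $Y_n$. Hence if $a\ne b$ and $v_3(a)=v_3(b)=i$, then $\{a,b\}=\{X_i,Y_i\}$. The plan is first to show that $i$ can be recovered from $S$, which yields the uniqueness claim. Second, we show that no pair $\{c,d\}$ with $v_3(c)\ne v_3(d)$ can produce this sum.

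First we compute $S$. We have
\[
S=X_i+Y_i=2^{n-i}3^i+2^{n-i+1}3^i=2^{n-i}3^{i}(1+2)=2^{n-i}3^{i+1},
\]
so $v_3(S)=i+1$ and $v_2(S)=n-i$. Either valuation determines $i$, and therefore $S$ determines the pair $\{X_i,Y_i\}$. This takes care of uniqueness among pairs of equal valuation.

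For the second assertion, suppose $S=c+d$ with $c\ne d$ and $v_3(c)\ne v_3(d)$. Set $j=\min\{v_3(c),v_3(d)\}$. As in the proof of Lemma~\ref{lem:wsidon_recover_minv3}, $v_3(c+d)=j$, which gives $j=i+1$. Then both $c$ and $d$ are divisible by $3^{i+1}$, and one of them, say $c$, has valuation exactly $i+1$, so $c\in\{X_{i+1},Y_{i+1}\}$. The other satisfies $v_3(d)\ge i+2$.

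It remains to rule this out, and we do so by comparing sizes rather than residues. Every element of valuation $k$ lies in $\{X_k,Y_k\}$, and both are at most $2^{n-k+1}3^k$. This bound increases with $k$ (each step multiplies it by $3/2$). Our sum is $S=2^{n-i}3^{i+1}=X_{i+1}$. Since $d>0$, the case $c=X_{i+1}$ is impossible, as it would force $d=0$. In the case $c=Y_{i+1}=2X_{i+1}$ we get $c>S$, which is also impossible because $d>0$. Hence no such $\{c,d\}$ exists.

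The argument is short; the only point needing care is checking the boundary indices. We need $i+1\le n$ so that $Y_{i+1}$ exists, and this holds because $i\le n-1$; when $i+1=n$, $X_n$ does not exist and only $Y_n$ remains. The size comparison makes the second step immediate.
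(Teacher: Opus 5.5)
Your plan follows the paper's proof, and the first assertion is correct: $\{a,b\}=\{X_i,Y_i\}$ and $S=2^{n-i}3^{i+1}$, so $S$ determines $i$. The reduction to $c\in\{X_{i+1},Y_{i+1}\}$ with $v_3(d)\ge i+2$ is also correct. The final case check, however, rests on a false identity. You wrote $S=X_{i+1}$, but
\[
X_{i+1}=2^{n-i-1}3^{i+1}=\tfrac{S}{2},\qquad Y_{i+1}=2^{n-i}3^{i+1}=S .
\]
Both of your case conclusions are therefore wrong as stated. The choice $c=X_{i+1}$ does not force $d=0$; it forces $d=S-X_{i+1}=X_{i+1}=c$. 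The choice $c=Y_{i+1}$ does not give $c>S$; it gives $c=S$. So the written argument for the second assertion does not hold, although the statement itself is true.

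The fix is short and matches the paper. If $c=Y_{i+1}$, then $d=0\notin A_{2n+1}$. If $c=X_{i+1}$, then $d=X_{i+1}=c$, which contradicts $c\ne d$ (equivalently, $v_3(d)=i+1<i+2$). The $X_{i+1}$ case is excluded by distinctness or by the valuation constraint, not by positivity. Indeed, $X_{i+1}+X_{i+1}=X_i+Y_i$ is exactly the $3$-term progression that keeps $A_{2n+1}$ from being Sidon, so an argument that kills this case via ``$d$ would be $0$'' cannot be right. In the boundary case $i=n-1$, you can simply note that no element of $A_{2n+1}$ has $3$-adic valuation $\ge n+1$, so $d$ cannot exist; equivalently, $c=Y_n=S$ forces $d=0$. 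Your remark that the bound $2^{n-k+1}3^k$ increases in $k$ is never used and can be dropped.
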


\begin{proof}
Let $i:=v_3(a)=v_3(b)$. Since $A_{2n+1}$ contains exactly two elements of $3$-adic valuation $i$ for $0\le i\le n-1$, namely $X_i$ and $Y_i$,
while the valuation $n$ occurs only for $Y_n$, the assumption $a\ne b$ forces $i\le n-1$ and $\{a,b\}=\{X_i,Y_i\}$. Hence
\[
S=a+b=X_i+Y_i = 2^{\,n-i}3^i + 2^{\,n-i+1}3^i = 2^{\,n-i}3^{i+1}.
\]
In particular, $v_3(S)=i+1$, so $i$ is uniquely determined by $S$ as $i=v_3(S)-1$.
Thus any representation of $S$ as a sum of two distinct elements of $A_{2n+1}$ with equal
$3$-adic valuation must be the pair $\{X_i,Y_i\}$.
This proves the first assertion of this lemma. 

It remains to rule out a representation $S=c+d$ with $c,d\in A_{2n+1}$ distinct and
$v_3(c)\neq v_3(d)$. In that case one has
\[
v_3(S)=\min\{v_3(c),v_3(d)\}=i+1,
\]
so the summand of smaller $3$-adic valuation has valuation $i+1$.
If $i=n-1$, then this forces that summand to be $Y_n=S$, so the other summand equals $0$, contradicting
$0\notin A_{2n+1}$. Hence we may assume $i\le n-2$, so the smaller-valuation summand is either $X_{i+1}$ or $Y_{i+1}$.

If it is $Y_{i+1}$, then $Y_{i+1}=2^{\,n-i}3^{i+1}=S$, forcing the other summand to be $0$,
again contradicting $0\notin A_{2n+1}$.
If it is $X_{i+1}$, then $X_{i+1}=2^{\,n-i-1}3^{i+1}=S/2$, forcing the other summand
to equal $S-X_{i+1}=X_{i+1}$, contradicting distinctness. Hence no such representation exists.
\end{proof}

Now we can verify that the sets $A_{2n+1}$ are indeed weak Sidon. 

\begin{proposition}\label{prop:A2n1_weak_sidon}
For every $n\ge 2$, the set $A_{2n+1}$ is a weak Sidon set.
\end{proposition}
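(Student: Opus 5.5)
The plan is to deduce the weak Sidon property directly from Lemmas~\ref{lem:wsidon_recover_minv3} and~\ref{lem:wsidon_recover_equalv3}. These two lemmas between them cover every pair of distinct elements of $A_{2n+1}$.

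Suppose $\{a,b\}$ and $\{c,d\}$ are unordered pairs of distinct elements of $A_{2n+1}$ with $a+b=c+d=:S$. We must show that $\{a,b\}=\{c,d\}$, and we split according to the $3$-adic valuations of the two pairs.

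First, suppose $v_3(a)=v_3(b)$. By the second assertion of Lemma~\ref{lem:wsidon_recover_equalv3}, $S$ has no representation $c+d$ with $v_3(c)\neq v_3(d)$, so necessarily $v_3(c)=v_3(d)$. The first assertion of the same lemma then says that the equal-valuation pair summing to $S$ is unique, which gives $\{c,d\}=\{a,b\}$. The case $v_3(c)=v_3(d)$ is symmetric.

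Otherwise both pairs have unequal valuations, i.e.\ $v_3(a)\neq v_3(b)$ and $v_3(c)\neq v_3(d)$. Lemma~\ref{lem:wsidon_recover_minv3} states that the pair is determined by $S$ within the class of pairs with unequal valuations. To see this, note that the lemma's proof recovers $i=v_3(S)$ and then recovers the smaller-valuation summand from its residue modulo $3^{i+1}$. Applied to both pairs, it returns the same pair, so $\{a,b\}=\{c,d\}$.

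Hence all sums $x+y$ with $x<y$ in $A_{2n+1}$ are distinct, and $A_{2n+1}$ is weak Sidon.

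The only real point of care is the cross-case, where one pair has equal valuations and the other does not. Lemma~\ref{lem:wsidon_recover_equalv3} was stated precisely to exclude this, so no new computation is needed; essentially all the work was done in the preceding two lemmas.
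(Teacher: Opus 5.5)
Your proof is correct and matches the paper's argument. The paper likewise sorts pairs by whether the two summands have equal $3$-adic valuation. It then uses Lemma~\ref{lem:wsidon_recover_minv3} together with the first assertion of Lemma~\ref{lem:wsidon_recover_equalv3} to force any collision to occur between pairs of different types, and rules that case out with the second assertion of Lemma~\ref{lem:wsidon_recover_equalv3}. Your direct case split, including your remark that Lemma~\ref{lem:wsidon_recover_minv3} gives uniqueness only within the unequal-valuation class, is a rephrasing of that contradiction argument.
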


\begin{proof}
In this proof, we call a pair \(\{a,b\}\subset A_{2n+1}\) type~1 if \(v_3(a)\neq v_3(b)\) and type~2 otherwise.
For a contradiction, let \(a<b\) and \(c<d\) be two pairs of distinct elements of \(A_{2n+1}\) such that
\(S:=a+b=c+d\).
By Lemma~\ref{lem:wsidon_recover_minv3} and the first assertion of Lemma~\ref{lem:wsidon_recover_equalv3},
one of the pairs (say \(\{a,b\}\)) must be of type~1 and the other pair \(\{c,d\}\) of type~2.
However, this contradicts the second assertion of Lemma~\ref{lem:wsidon_recover_equalv3}.
Hence all sums $x+y$ with $x<y$ in
$A_{2n+1}$ are distinct, i.e.\ $A_{2n+1}$ is weak Sidon.
\end{proof}

We require further properties of \(A_{2n+1}\) and its A.P.-hypergraph \(H(A_{2n+1})\).
The next lemma characterizes all \(3\)-term arithmetic progressions in \(A_{2n+1}\).

\begin{lemma}\label{lem:A2n1_all_3AP}
All $3$-term arithmetic progressions in $A_{2n+1}$ are given by the following $2n-1$ triples:
\[
\{X_i,X_{i+1},Y_i\}\quad \mbox{for} \quad 0\le i\le n-2,
\quad \mbox{and} \quad
\{X_i,Y_i,Y_{i+1}\}\quad \mbox{for} \quad 0\le i\le n-1.
\]
\end{lemma}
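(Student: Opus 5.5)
We split the argument into two parts: checking that the listed triples are progressions, and showing there are no others. Verification is direct. For $0\le i\le n-2$,
\[
X_i+Y_i = 2^{n-i}3^{i+1} = 2X_{i+1},
\]
so $X_i,X_{i+1},Y_i$ form a progression with middle term $X_{i+1}$. For $0\le i\le n-1$,
\[
X_i+Y_{i+1}=2^{n-i}3^i+2^{n-i}3^{i+1}=2^{n-i+2}3^i=2Y_i,
\]
so $X_i,Y_i,Y_{i+1}$ form a progression with middle $Y_i$. The counts are $(n-1)+n=2n-1$, and the triples are pairwise distinct.

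For completeness, observe that a $3$-term progression $\{u,w,v\}$ with middle $w$ is exactly a representation $2w=u+v$ with $u\ne v$ distinct elements of $A_{2n+1}$. By Proposition~\ref{prop:A2n1_weak_sidon}, or more precisely by Lemmas~\ref{lem:wsidon_recover_minv3} and~\ref{lem:wsidon_recover_equalv3}, the pair $\{u,v\}$ is uniquely determined by the sum $2w$. So for each possible middle $w\in A_{2n+1}$ there is at most one progression, consistent with Lemma~\ref{lem:mleqn}. It therefore suffices to determine, for each $w$, whether $2w$ is a sum of two distinct elements. We have $2X_j=2^{n-j+1}3^j$ and $2Y_j=2^{n-j+2}3^j$, and these have $v_3=j$.

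We then classify by type. If $2w=u+v$ with $v_3(u)=v_3(v)$, then the proof of Lemma~\ref{lem:wsidon_recover_equalv3} gives $\{u,v\}=\{X_i,Y_i\}$ and $2w=2^{n-i}3^{i+1}$. Comparing valuations forces $j=i+1$, and comparing powers of $2$ forces $w=X_{i+1}$, since $2Y_{i+1}$ has a different power of $2$. This yields the first family, with the constraint $i+1\le n-1$.

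Otherwise $v_3(u)\ne v_3(v)$. Say $v_3(u)=j$ is the smaller valuation, so $u\in\{X_j,Y_j\}$ (or $u=Y_n$ when $j=n$) and $v=2w-u$. We check the finitely many shapes.
\begin{itemize}
\item If $w=X_j$: then $u=X_j$ gives $v=u$, which is invalid. The choice $u=Y_j$ gives $v=0$, also invalid.
\item If $w=Y_j$: then $u=Y_j$ gives $v=u$, invalid. The choice $u=X_j$ gives $v=2^{n-j+2}3^j-2^{n-j}3^j=3\cdot2^{n-j}3^j=Y_{j+1}$, which lies in $A_{2n+1}$ exactly when $j\le n-1$ and has valuation $j+1>j$, as required. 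This yields the second family.
\item If $w=Y_n$ and $j=n$: then $u=Y_n$, which is impossible.
\end{itemize}
The one point needing care is the valuation bookkeeping: $v_3(u+v)=\min$ forces $v_3(u)=v_3(2w)=j$, which is what pins $u$ to the two candidates. No step seems a real obstacle; the case enumeration is short.
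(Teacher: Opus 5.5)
Your proof is correct, but the completeness half takes a different route from the paper. The paper does not classify progressions at all. It invokes Lemma~\ref{lem:mleqn}, which applies because $A_{2n+1}$ is weak Sidon by Proposition~\ref{prop:A2n1_weak_sidon}. That lemma gives $m_{H(A_{2n+1})}\le |A_{2n+1}|-2=2n-1$. Since exactly $2n-1$ distinct progressions have been exhibited, there are no others.

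You instead enumerate all representations $2w=u+v$ for each candidate middle $w$. This uses $v_3(2w)=v_3(w)$, together with the fact that valuation $j\le n-1$ is carried only by $X_j,Y_j$ and valuation $n$ only by $Y_n$. Your case split is exhaustive and each computation checks out (e.g.\ $2X_j-Y_j=0$ and $2Y_j-X_j=Y_{j+1}$).

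The two routes buy different things. Yours is essentially independent of the weak Sidon property, since the enumeration only uses valuations. It would find all progressions even if the exhibited family did not happen to saturate the bound $|A|-2$. The paper's route buys brevity.

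You already had the paper's argument in hand. Once you noted that each middle carries at most one progression, it suffices to observe where the listed triples have their middles: at $X_1,\dots,X_{n-1},Y_0,\dots,Y_{n-1}$. That is every element except $X_0=\min A_{2n+1}$ and $Y_n=\max A_{2n+1}$, which can never be middles. Given your own remark, the case analysis is therefore redundant, though harmless.
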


\begin{proof}
Direct computation shows
\[
X_i+Y_i = 2^{\,n-i}3^i + 2^{\,n-i+1}3^i = 2\cdot 2^{\,n-i-1}3^{i+1} = 2X_{i+1}
\quad \mbox{for} \quad 0\le i\le n-2,
\]
and
\[
X_i+Y_{i+1}
= 2^{\,n-i}3^i + 2^{\,n-i}3^{i+1}
= 4\cdot 2^{\,n-i}3^i
= 2Y_i
\quad \mbox{for} \quad 0\le i\le n-1.
\]
Thus the listed $2n-1$ triples are indeed $3$-term arithmetic progressions in $A_{2n+1}$.

Since \(A_{2n+1}\) is weak Sidon (by Proposition~\ref{prop:A2n1_weak_sidon}),
Lemma~\ref{lem:mleqn} implies that the midpoint map on its A.P.-hypergraph
\(H(A_{2n+1})\) is injective. Consequently,
\[
m_{H(A_{2n+1})} \le n_{H(A_{2n+1})}-2 = |A_{2n+1}|-2 = 2n-1.
\]
Since we have already listed \(2n-1\) distinct \(3\)-term arithmetic progressions, no others exist.
\end{proof}

We need one more result before proving the main theorem of this subsection, which determines the exact value of $h(A_{2n+1})$.

\begin{theorem}\label{thm:A2n1_h_equals_nplus1}
For every $n\ge 2$, the set $A_{2n+1}$ satisfies \(h(A_{2n+1}) = n+1\).
\end{theorem}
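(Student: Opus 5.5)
By Lemma~\ref{lem:sidon-iff-apfree}, $h(A_{2n+1})=\alpha(H)$, where $H=H(A_{2n+1})$. By Lemma~\ref{lem:A2n1_all_3AP}, the edge set of $H$ is explicitly known. So the task is purely combinatorial: show that $\alpha(H)=n+1$, or equivalently $\tau(H)=n$, since $|V(H)|=2n+1$.

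It helps to view $H$ as a path-like structure. Order the vertices as
\[
Y_0,\ X_0,\ Y_1,\ X_1,\ \dots,\ Y_{n-1},\ X_{n-1},\ Y_n .
\]
In this ordering the edges are the consecutive triples. Indeed, $\{X_i,Y_i,Y_{i+1}\}$ is $\{Y_i,X_i,Y_{i+1}\}$, and $\{X_i,X_{i+1},Y_i\}$ contains the consecutive block $X_i,Y_{i+1},X_{i+1}$ only if we rename. So first I would check carefully which ordering makes every edge a window of three consecutive vertices.

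The edges are $\{Y_i,X_i,Y_{i+1}\}$ and $\{Y_i,X_i,X_{i+1}\}$. Both contain the pair $\{X_i,Y_i\}$. So the structure is a chain of pairs $P_i=\{X_i,Y_i\}$ for $0\le i\le n-1$, plus $Y_n$. Each pair $P_i$ together with either element of the next "layer" ($X_{i+1}$ or $Y_{i+1}$) forms an edge. The last layer is just $\{Y_n\}$.

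\textbf{Lower bound $\alpha(H)\ge n+1$.} Take $S=\{Y_0,\dots,Y_n\}$. The only edge type that could lie inside $S$ needs $X_i$, so $S$ is independent and $|S|=n+1$. Alternatively $\{X_0,\dots,X_{n-1},Y_n\}$ also works. This direction is trivial.

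\textbf{Upper bound $\alpha(H)\le n+1$.} Let $S$ be independent. Let $I$ be the set of indices $i\le n-1$ with $P_i\subseteq S$. If $i\in I$, then both $X_{i+1}$ and $Y_{i+1}$ must be absent from $S$ (for $i=n-1$, only $Y_n$). So layer $i+1$ contributes $0$ elements, whereas layer $i$ contributes $2$.

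Assign layers to blocks. Each $i\in I$ is paired with layer $i+1$, and these pairs of layers are disjoint, because $i+1\notin I$ when layer $i+1$ is empty. Each such pair contributes $2$ elements from two layers. Every other layer (layer $j\le n-1$ contributing at most $1$, and layer $n$ contributing at most $1$) contributes at most $1$ per layer. There are $n+1$ layers in total, so $|S|\le n+1$.

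The main point to check is that the layer pairing is well defined. A layer $i+1$ that is emptied by $i\in I$ is not itself in $I$, and distinct $i\in I$ yield distinct $i+1$. Summing over the layers then gives at most one element per layer on average. Combining both bounds gives $h(A_{2n+1})=n+1$.
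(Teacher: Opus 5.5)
Your proof is correct, but the upper bound is organized differently from the paper's.

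The paper works only with $U=\{i: X_i\in T\}$. It splits $U$ into maximal runs $[s,t]$ of consecutive indices and charges a run of length $\ell$ to $\ell$ distinct omitted $Y$'s: $Y_s,\dots,Y_{t-1}$ via the edges $\{X_i,X_{i+1},Y_i\}$, plus one of $Y_t,Y_{t+1}$ via $\{X_t,Y_t,Y_{t+1}\}$. It then checks that charges from different runs do not overlap, which gives $|T\cap\{Y_0,\dots,Y_n\}|\le n+1-|U|$.

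You instead group the vertices into the $n+1$ layers $\{X_i,Y_i\}$ ($0\le i\le n-1$) and $\{Y_n\}$. You observe that every edge is a full layer plus one vertex of the next layer. Hence an independent set takes at most one vertex per layer, except that a full layer forces the following layer to be empty. Pairing each full layer with its successor finishes the count by averaging; the pairs are disjoint because $I$ contains no two consecutive indices.

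Your layer picture makes the structure of $H$ more transparent and needs only this one local implication. It replaces both the run decomposition and the cross-run disjointness check. The paper's version, in exchange, gives an explicit injection from chosen $X$'s to omitted $Y$'s.

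One thing to delete: your opening claim that in the order $Y_0,X_0,Y_1,X_1,\dots,Y_n$ the edges are exactly the windows of three consecutive vertices is false, since the edges $\{X_i,Y_i,X_{i+1}\}$ are not windows. You abandon this claim and nothing later depends on it.
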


\begin{proof}
By Proposition~\ref{prop:A2n1_weak_sidon}, $A_{2n+1}$ is weak Sidon. 
Let $H_{2n+1}=H(A_{2n+1})$ be its A.P.-hypergraph. 
Lemma~\ref{lem:sidon-iff-apfree} gives $h(A_{2n+1})=\alpha(H_{2n+1})$; hence it suffices to show $\alpha(H_{2n+1})=n+1$.

First, we prove the lower bound $\alpha(H_{2n+1})\geq n+1$.
We claim that the set $\{Y_0,\dots,Y_n\}$ is an independent set in $H_{2n+1}$ (i.e., $3$AP-free). Indeed, by Lemma~\ref{lem:A2n1_all_3AP} we observe that every $3$-term arithmetic progression in $A_{2n+1}$ contains some $X_i$. Hence we have $\alpha(H_{2n+1})\ge n+1$.

To show the upper bound $\alpha(H_{2n+1})\leq n+1$,
consider any $3$AP-free $T\subseteq A_{2n+1}$. Write
\[
U:=\{\,i\in\{0,\dots,n-1\}: X_i\in T\,\}.
\]
Decompose $U$ into disjoint maximal intervals of consecutive integers:
\[
U=\bigsqcup_{j=1}^r ([s_j,t_j]\cap\mathbb{Z}),
\quad \mbox{where} \quad s_j\le t_j \quad \mbox{and} \quad t_j+1<s_{j+1}.
\]
Fix one such interval $[s,t]$ and set $\ell:=t-s+1$.

For each $i$ with $s\le i<i+1\le t$ we have $i,i+1\in U$, and hence $X_i,X_{i+1}\in T$. By Lemma~\ref{lem:A2n1_all_3AP}, the triple $\{X_i,X_{i+1},Y_i\}$ is a $3$AP, and thus we must have $Y_i\notin T$.
Hence
\[
Y_s,Y_{s+1},\dots,Y_{t-1}\notin T.
\]
Moreover, the triple $\{X_t,Y_t,Y_{t+1}\}$ is a $3$AP, so since $X_t\in T$ we cannot have
$Y_t,Y_{t+1}\in T$ simultaneously. Therefore at least one of $Y_t$ and $Y_{t+1}$ is not in $T$.
Consequently, the interval $[s,t]$ forces at least $\ell$ distinct $Y$-vertices to be absent from $T$
(namely, the $\ell-1$ vertices $Y_s,\dots,Y_{t-1}$ together with one of $Y_t$ or $Y_{t+1}$).

Because the intervals are separated by at least one index, these forced exclusions coming from different
intervals are disjoint. Hence $U$ forces at least $|U|$ vertices from $\{Y_0,\dots,Y_n\}$ to be absent
from $T$. It follows that
\(
|T\cap\{Y_0,\dots,Y_n\}|\le (n+1)-|U|.
\)
Therefore
\[
|T|=|U|+|T\cap\{Y_0,\dots,Y_n\}|\le |U|+\bigl((n+1)-|U|\bigr)=n+1.
\]
Thus $\alpha(H_{2n+1})\le n+1$. 
This completes the proof that $h(A_{2n+1})=\alpha(H_{2n+1})=n+1$.
\end{proof}

With the preceding results in hand, we can derive the upper bound \(g(n)\le \left\lceil \frac{n+1}{2}\right\rceil\) as follows.

\begin{proof}[Proof of the upper bound in Theorem~\ref{thm:gamma_star_bounds}]
We prove that for every $n\ge1$,
\[
g(n)\le \left\lceil \frac{n+1}{2}\right\rceil.
\]For odd $n=2k+1$ with $k\ge2$, Proposition~\ref{prop:A2n1_weak_sidon} and
Theorem~\ref{thm:A2n1_h_equals_nplus1} give that $A_{2k+1}$ is weak Sidon and
$h(A_{2k+1})=k+1$. Hence
\[
g(2k+1)\le h(A_{2k+1})=k+1=\left\lceil \frac{(2k+1)+1}{2}\right\rceil.
\]For even $n=2k$ with $k\ge2$, let $B$ be any $(2k)$-element subset of $A_{2k+1}$
(obtained by deleting one element). Since the weak Sidon property is inherited by
subsets, $B$ is weak Sidon. Moreover, any Sidon subset of $B$ is also a Sidon
subset of $A_{2k+1}$, so $h(B)\le h(A_{2k+1})=k+1$. Therefore,
\[
g(2k)\le h(B)\le k+1=\left\lceil \frac{(2k)+1}{2}\right\rceil.
\]
Finally, the remaining cases $n\in\{1,2,3\}$ are immediate, and thus the stated
upper bound holds for all $n\ge1$.
\end{proof}

\subsection{\texorpdfstring{Lower bound on $g(n)$}{Lower bound on g(n)}}
To show the desired lower bound on $\gamma_*$, we need the following upper bound on the transversal number of a $3$-uniform hypergraph, due to Chv\'{a}tal and McDiarmid~\cite{ChMc92} and independently Tuza~\cite{Tuz90} (see also~\cite[Theorem~1]{HeYe21}).

\begin{theorem}[\cite{ChMc92,Tuz90}]\label{thm:CMT_v1}
Let $H$ be a $3$-uniform hypergraph. Then \(4\tau(H)\le n_H+m_H\).
\end{theorem}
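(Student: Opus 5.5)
We prove the bound by induction on $n_H$, peeling off high-degree vertices until the maximum degree is at most $2$. When the maximum degree is at most $2$, computing $\tau(H)$ reduces to an edge-cover problem in an auxiliary multigraph, which Shannon's edge-colouring theorem settles. The base case $n_H=0$ is trivial, since then $H$ has no edges and $\tau(H)=0$.

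\emph{Reduction step.} Suppose some vertex $v$ has degree at least $3$. Put $v$ into the transversal and delete $v$ together with all edges containing it. The result $H'$ is $3$-uniform with $n_{H'}=n_H-1$ and $m_{H'}\le m_H-3$. By induction,
\[
\tau(H)\le 1+\tau(H')\le 1+\tfrac{(n_H-1)+(m_H-3)}{4}=\tfrac{n_H+m_H}{4}.
\]
Deleting an isolated vertex only lowers $n_H$, so that case is also harmless. We may therefore assume every vertex has degree $1$ or $2$. Picking a degree-$2$ vertex greedily loses $1/4$ in the count, so this case needs a global argument instead.

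\emph{Maximum degree at most $2$.} Let $d_1,d_2$ be the numbers of vertices of degree $1$ and $2$. Double counting gives $3m_H=d_1+2d_2$ and $n_H=d_1+d_2$, hence $d_2=3m_H-n_H$. Form a multigraph $G$ whose vertex set is $E(H)$, with one edge joining $e$ and $e'$ for each degree-$2$ vertex of $H$ lying in both. Then $|E(G)|=d_2$, and $G$ has no loops because the two edges through a vertex are distinct. Since each hyperedge has only $3$ vertices, $G$ has maximum degree at most $3$.

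A transversal of $H$ is built from a matching $M$ of $G$ as follows. For each edge of $M$, take the corresponding degree-$2$ vertex, which covers both hyperedges at its ends. For each hyperedge not covered by $M$, take any one of its vertices. This gives $\tau(H)\le m_H-|M|$, so it suffices to find a matching with
\[
|M|\ge \frac{3m_H-n_H}{4}=\frac{d_2}{4}=\frac{|E(G)|}{4}.
\]
Shannon's theorem states that a loopless multigraph with maximum degree $\Delta$ has a proper edge colouring with at most $\lfloor 3\Delta/2\rfloor$ colours. With $\Delta\le 3$, $G$ is properly edge-colourable with $4$ colours. Some colour class is then a matching of size at least $|E(G)|/4$, which gives $4\tau(H)\le n_H+m_H$.

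\emph{Main obstacle.} The delicate part is the degree-$\le 2$ case. The naive greedy choice of a degree-$2$ vertex falls short by $1/4$, so the argument must use the matching structure of $G$ globally. Applying Shannon's bound correctly to multigraphs, which may have parallel edges when two hyperedges share two vertices, is what makes the constant $4$ come out exactly. If one prefers to avoid Shannon, one can instead prove directly that every loopless multigraph with maximum degree $3$ has a matching of size at least $|E|/4$, by induction on its components. That direct route is where I would expect the most case work.
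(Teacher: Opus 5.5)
Your proof is correct. The paper gives no proof of this statement; it quotes it from Chv\'atal--McDiarmid and Tuza. So there is no internal argument to compare against, and yours serves as a self-contained derivation from Shannon's edge-colouring theorem.

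The steps all check out:
\begin{itemize}
\item \emph{Peeling high-degree vertices.} If $v$ has degree at least $3$, then $v$ together with a transversal of $H'$ covers every edge of $H$. The inequality $1\le \frac{1+3}{4}$ is exactly where degree at least $3$ is needed.
\item \emph{The degree-at-most-$2$ case.} The count $d_2=3m_H-n_H$, the bound $\tau(H)\le m_H-|M|$, and the reduction to finding a matching with $|M|\ge |E(G)|/4$ are all sound.
\item \emph{Finishing with Shannon.} Shannon's bound $\lfloor 3\Delta/2\rfloor\le 4$ for $\Delta\le 3$, combined with pigeonhole over the colour classes, completes the argument.
\end{itemize}

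The multigraph form of Shannon's theorem is genuinely needed in the paper's setting, not just a technicality. For a weak Sidon set $A$, the A.P.-hypergraph $H(A)$ need not be linear. The paper's own example $A=\{1,2,3,5\}$ gives the edges $\{1,2,3\}$ and $\{1,3,5\}$, which share the two degree-$2$ vertices $1$ and $3$. In that case your auxiliary graph $G$ has a double edge. Vizing's multigraph bound $\Delta+\mu$ would not give $4$ colours there, while Shannon's bound does.

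Your argument also generalizes verbatim to $k$-uniform hypergraphs. Vertices of degree at least $3$ can still be peeled off, $G$ has maximum degree at most $k$, and $d_2=km_H-n_H$. Using $\lfloor 3k/2\rfloor-k=\lfloor k/2\rfloor$, you obtain $\lfloor 3k/2\rfloor\,\tau(H)\le n_H+\lfloor k/2\rfloor m_H$. This is the general Chv\'atal--McDiarmid bound, with Shannon's constant appearing as the denominator. Your route therefore shows clearly where the constant $4$ in the cited theorem comes from.
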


We now prove the lower bound in Theorem~\ref{thm:gamma_star_bounds}, namely \(g(n)\ge \left\lceil \frac{n+1}{2}\right\rceil\) for all $n\ge 1$.

\begin{proof}[Proof of the lower bound in Theorem~\ref{thm:gamma_star_bounds}]
Let $A\subset \mathbb{R}$ be any weak Sidon set of size $n$, and let $H=H(A)$ be its A.P.-hypergraph.
If $n=1$, then $h(A)=1=\frac12 n+\frac12$. 
So we may assume $n\ge 2$.
Then the following hold:
\begin{enumerate}
    \item $H$ is $3$-uniform;
    \item $m_H\le n_H-2$ (by Lemma~\ref{lem:mleqn});
    \item $h(A)=\alpha(H)$ (by Lemma~\ref{lem:sidon-iff-apfree}).
\end{enumerate}
Then, using Theorem~\ref{thm:CMT_v1}, we can derive that
\[
\tau(H)\le \frac14(n_H+m_H)\le \frac14\bigl(n_H+(n_H-2)\bigr)=\frac12n_H-\frac12.
\]
Hence
\[
h(A)=\alpha(H)=n_H-\tau(H)\ge n_H-\Bigl(\frac12n_H-\frac12\Bigr)=\frac12n_H+\frac12=\frac12n+\frac12.
\]
Therefore, we have
\[
g(n)=\min_{A} h(A) \ge \frac12 n+\frac12
\qquad\text{for all }n\ge 1.
\]
Since $g(n)$ is an integer, this implies
\[
g(n)\ \ge\ \left\lceil \frac{n+1}{2}\right\rceil,
\]
as desired.
\end{proof}

Together with the matching upper bound proved earlier, we obtain
\[
g(n)=\left\lceil \frac{n+1}{2}\right\rceil
\qquad\text{for all } n\ge 1.
\]
This establishes Theorem~\ref{thm:gamma_star_bounds}. In particular, \(\gamma_*=\frac12\). Combined with the existence statement established earlier, this completes the proof of Theorem~\ref{thm:limit_exists_v2}.

\section{\texorpdfstring{Sidon subsets in $(4,5)$-sets}{Sidon subsets in (4,5)-sets}}\label{sec:c_star_bounds}

In this section we prove Theorem~\ref{thm:c_star_bounds} by improving both the upper and lower bounds on \(c_*\).

\subsection{\texorpdfstring{An improved upper bound on $c_*$}{An improved upper bound on c*}}\label{subsec:A_base_v1}

We begin with a concrete $14$-point $(4,5)$-set whose largest Sidon subset has size $8$. Let
\[
A_{\mathrm{base}}
=\{0,136,200,243,246,249,272,286,298,323,400,528,596,1056\}\subset\mathbb Z.
\]

\begin{lemma}\label{lem:base-block}
The set \(A_{\mathrm{base}}\) is a \((4,5)\)-set, and \(h(A_{\mathrm{base}})=8\).
\end{lemma}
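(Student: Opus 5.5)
The plan is a finite verification in three parts: the $(4,5)$-property, a $3$-AP-free subset of size $8$, and the absence of any $3$-AP-free subset of size $9$. Let $A:=A_{\mathrm{base}}$.

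\emph{The $(4,5)$-property.} There are $\binom{14}{4}=1001$ four-element subsets, and for each we would check that the six pairwise absolute differences take at least five values. This is easy to do by machine. By hand it can be organized as follows. A $4$-set has at most four distinct differences exactly when two disjoint coincidences occur, or when one difference value is repeated three times. Concretely, a $4$-set $\{x_1<x_2<x_3<x_4\}$ fails the $(4,5)$-property only if it is an "additive quadruple" with $x_1+x_4=x_2+x_3$, or it contains two $3$-APs sharing two points, or it has some other double repetition. So the first step is to list all coincidences $|a-b|=|c-d|$ among the $\binom{14}{2}=91$ differences, by sorting the differences. Then check that no $4$-set contains two independent coincidences.

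\emph{Reduction to the A.P.-hypergraph.} Once $A$ is known to be a $(4,5)$-set, Proposition~\ref{prop:45_implies_weak} shows it is weak Sidon. Lemma~\ref{lem:sidon-iff-apfree} then gives $h(A)=\alpha(H(A))$. So we next enumerate all $3$-APs in $A$ by checking, for each middle element $x$, whether $2x-a\in A$ for $a<x$. Lemma~\ref{lem:mleqn} and Lemma~\ref{lem:linear} guarantee at most $12$ edges, with distinct midpoints, forming a linear hypergraph; this serves as a sanity check on the enumeration. For example, $243,246,249$ is an edge, and $136,200,264$ is not since $264\notin A$. The edges are found by a direct search.

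\emph{The two bounds on $\alpha$.} For $\alpha(H(A))\ge 8$, exhibit an explicit $8$-element set containing no edge and check it directly. For $\alpha(H(A))\le 8$, equivalently $\tau(H(A))\ge 6$, we must show that every $9$-subset contains an edge. There are $\binom{14}{9}=2002$ such subsets, which can be checked exhaustively, as the paper indicates. A human-readable alternative is to find a large matching or a structured sub-configuration and then do a short case analysis on which vertices of the high-degree hubs are deleted. We expect this to take the form of a branching argument on the vertices of largest degree.

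The main obstacle is the upper bound $h(A)\le 8$. The lower bound and the $(4,5)$-check are mechanical, whereas excluding every $9$-subset requires either exhaustive enumeration or a careful branching argument over the hypergraph structure. I would rely on computer enumeration and present the edge list so that the reader can verify it.
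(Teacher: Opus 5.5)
Your proposal is correct and matches the paper's argument: the paper also verifies both assertions by exhaustive computer search, checking all $1001$ four-element subsets for the $(4,5)$-property and testing all $2^{14}$ subsets directly for the Sidon property, with witness $\{0,136,200,243,246,298,323,528\}$. Your one variation is to pass first to $H(A_{\mathrm{base}})$ via Proposition~\ref{prop:45_implies_weak} and Lemma~\ref{lem:sidon-iff-apfree}, which is valid, and it makes the step you call the main obstacle easy by hand, since there are exactly $12$ edges (one for each midpoint other than $0$ and $1056$): a $5$-element transversal avoiding $0$ must hit each of the disjoint pairs $\{136,272\},\{200,400\},\{298,596\},\{528,1056\}$, so its fifth vertex lies in $\{243,246,249\}\cap\{249,286,323\}=\{249\}$, and then the edges $\{200,243,286\}$ and $\{246,323,400\}$ force both $200$ and $400$, giving at least six vertices, while a transversal containing $0$ must spend a vertex on $\{136,596,1056\}$ (which meets no other uncovered edge), and a short check shows the remaining seven edges cannot be covered by three vertices.
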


\begin{proof}
Both assertions were verified by an exhaustive computer search using exact integer arithmetic;
see the code repository cited in~\cite{Git}.

\medskip

\noindent\underline{\textit{Verification of the $(4,5)$-property.}}
The program enumerates all $\binom{14}{4}=1001$ four-element subsets $B\subseteq A_{\mathrm{base}}$.
For each such $B=\{x_1,x_2,x_3,x_4\}$ it computes the set
\[
\{\,|x_i-x_j|:\ 1\le i<j\le 4\,\}
\]
of the six pairwise absolute differences and checks that its cardinality is at least $5$.
Since no counterexample is found, $A_{\mathrm{base}}$ is a $(4,5)$-set.

\medskip

\noindent\underline{\textit{Computation of $h(A_{\mathrm{base}})$.}}
To determine the largest size of a Sidon subset, the program enumerates all $2^{14}=16384$ subsets
$S\subseteq A_{\mathrm{base}}$ and tests whether $S$ is Sidon by checking that all sums $x+y$
with $x\le y$ and $x,y\in S$ are pairwise distinct.
The maximum size encountered is $8$, and the script outputs the explicit Sidon subset
\[
\{0,136,200,243,246,298,323,528\}.
\]
Thus \(h(A_{\mathrm{base}})\ge 8\), while the exhaustive search shows that no larger Sidon subset exists.
Therefore \(h(A_{\mathrm{base}})=8\).
\end{proof}

With this example in hand, the upper bound \(c_*\le 4/7\) follows immediately from Theorem~\ref{thm:limit_exists_v1}.

\begin{proof}[Proof of the upper bound in Theorem~\ref{thm:c_star_bounds}]
By Theorem~\ref{thm:limit_exists_v1} we have \(c_{*}=\inf_{n\ge 1}\frac{f(n)}{n}\).
Taking \(n=14\) and using Lemma~\ref{lem:base-block}, we obtain \(f(14)\le h(A_{\mathrm{base}})=8\). Therefore
\[
c_{*}=\inf_{n\ge 1}\frac{f(n)}{n} \le \frac{f(14)}{14}\le \frac{8}{14}=\frac{4}{7},
\]
as claimed.
\end{proof}

\subsection{\texorpdfstring{An improved lower bound on $c_*$}{An improved lower bound on c*}}

In the lower-bound direction, we follow the A.P.-hypergraph approach of Gy\'arf\'as and Lehel~\cite{GyLe95},
but we strengthen the transversal estimate using a result of Henning and Yeo~\cite{HeYe21}.
This result, well suited to our setting, does not appear to have been noted previously.

For a \((4,5)\)-set $A$, Gy\'arf\'as and Lehel~\cite[Proposition~2.6]{GyLe95} showed that \(H(A)\) avoids the \(7\)-vertex configuration \(F_7\) shown in Figure~\ref{fig:F7}.

\begin{lemma}[\cite{GyLe95}]\label{lem:H0free}
If $A$ is a $(4,5)$-set, then $H(A)$ contains no subhypergraph isomorphic to $F_7$.
\end{lemma}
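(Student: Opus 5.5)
The plan is to assume, for contradiction, that $H(A)$ contains a copy of $F_7$, and to turn the five edges into linear relations among a few real parameters. Since $A$ is a $(4,5)$-set, it is weak Sidon by Proposition~\ref{prop:45_implies_weak}. So Lemma~\ref{lem:mleqn} applies: the five edges have pairwise distinct midpoints. By Lemma~\ref{lem:linear}, $H(A)$ is also linear, which forces the seven vertices of the copy to be distinct.

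By Lemma~\ref{lem:affine} we translate so that the degree-$3$ vertex $0$ of $F_7$ sits at the origin. Each edge through $0$ is a $3$-term progression containing $0$, so its two other vertices are, up to order, one of two patterns. Either they are $\{d,2d\}$ with $d\neq 0$, where $0$ is an endpoint and $d$ the midpoint. Or they are $\{-d,d\}$, where $0$ is the midpoint. Injectivity of midpoints allows the second pattern for at most one of the three edges $\{0,1,2\},\{0,3,4\},\{0,5,6\}$.

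This gives a finite list of configurations. The parameters are $d_1,d_2,d_3\neq 0$. For each edge we also choose which of its two non-$0$ vertices is labelled with the odd index (the one lying in $\{1,3,5\}$) and which with the even index (the one in $\{2,4,6\}$). We then reduce by symmetry. The reflection $x\mapsto -x$ and a global rescaling are allowed by Lemma~\ref{lem:affine}. The automorphisms of $F_7$ can permute the three edges through $0$ while swapping the classes $\{1,3,5\}$ and $\{2,4,6\}$ simultaneously. Together these should leave only a handful of essentially different cases, distinguished by whether $0$ is a midpoint of some edge and by how many of the three edges put their "far" vertex ($2d$ or $d$) into the odd class.

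In each case, the conditions that $\{1,3,5\}$ and $\{2,4,6\}$ are $3$-term progressions say this. Among the three listed values, the middle one is the average of the other two. Each such condition is a homogeneous linear equation in $d_1,d_2,d_3$, with a choice of which element is the middle, giving three subcases per edge. Two such equations in three unknowns determine $(d_1,d_2,d_3)$ up to scale, or else are inconsistent with the $d_i$ being nonzero and the seven points being distinct. For each surviving ratio vector we write down the seven points explicitly. We then look for a $4$-subset with at most four distinct absolute differences, for example a $4$-term progression or a pattern $\{u,u+d/2,u+d,u+2d\}$-like. Alternatively we look for four points with $a+d=b+c$, which violates weak Sidon, or two edges with a common midpoint. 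Any of these contradicts the $(4,5)$ property.

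The main obstacle is the case bookkeeping. There are up to $3^2$ midpoint choices times the labelling and pattern choices, and one must be sure no configuration survives. Before solving anything, I would first try to cut the cases down using the weak Sidon condition directly. For instance, for the two edges $\{0,d_i,2d_i\}$ and $\{0,d_j,2d_j\}$, the odd-class and even-class progressions impose symmetric relations. Subtracting the two progression equations should often immediately give a relation of the form $x+y=z+w$ among four distinct vertices, which is forbidden by weak Sidon. Only the residual cases would then need an explicit check of differences. If this bookkeeping proves unwieldy, a fallback is a short exhaustive symbolic enumeration over the finitely many sign and role patterns, verifying that each solution space yields such a degenerate quadruple.
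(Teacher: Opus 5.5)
The paper gives no proof of this lemma; it imports it from Gy\'arf\'as and Lehel (their Proposition~2.6). Your case analysis is therefore a genuinely different, self-contained route. It does close, but one step needs correcting.

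\textbf{The rank claim is false as stated.} Two progression equations need not pin down $(d_1,d_2,d_3)$ up to scale. Suppose $p_i=d_i$ and $q_i=2d_i$ for every $i$. Then the image of $\{2,4,6\}$ is $2\cdot\{1,3,5\}$, so every progression $\{d_1,d_2,d_3\}$ works, giving a two-parameter family. Such rank-one systems arise exactly when the middle of $\{1,3,5\}$ or of $\{2,4,6\}$ is also the midpoint of an edge through $0$. The fix is to apply Lemma~\ref{lem:mleqn} \emph{before} solving. If the middle of a transversal edge lies on an edge $\{0,d_j,2d_j\}$, injectivity forces it to be the far vertex $2d_j$. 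After this pruning, every remaining system has rank two.

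\textbf{You can skip the solving altogether.} Write $x_0$ for the degree-$3$ vertex and $\mu_i$ for the midpoint of the $i$-th edge through $x_0$. Write $m_P,m_Q$ for the middles of the images of $\{1,3,5\}$ and $\{2,4,6\}$. A $3$-term progression sums to three times its midpoint, so adding the five identities gives
$x_0+m_P+m_Q=\mu_1+\mu_2+\mu_3$.

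If no $\mu_i$ equals $x_0$, the far-vertex rule gives $m_P=2\mu_j-x_0$ and $m_Q=2\mu_{j'}-x_0$. Here $j\neq j'$, since $p_j\neq q_j$. The identity then becomes $\mu_j+\mu_{j'}=x_0+\mu_k$, where $k$ is the third index.

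Otherwise, after relabelling, $\mu_3=x_0$, and the identity becomes $m_P+m_Q=\mu_1+\mu_2$. Let $p_3,q_3$ be the other two vertices of that edge, so $p_3+q_3=2x_0$. The four possible choices of $(m_P,m_Q)$ give one of:
$\mu_1+\mu_2=p_3+q_3$, or $p_3+\mu_j=x_0+\mu_{3-j}$, or $q_3+\mu_j=x_0+\mu_{3-j}$.

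Each of these is a relation $x+y=z+w$ among four distinct vertices of the copy, which weak Sidon forbids. So only the weak Sidon property is ever used, and the difference-pattern checks you anticipated are never needed.

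\textbf{Comparison.} The citation buys brevity but is opaque. Your route, carried out, is self-contained and proves more: $H(A)$ is $F_7$-free for every weak Sidon set $A$, not just for $(4,5)$-sets.
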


We invoke a transversal bound for $3$-uniform linear $F_7$-free hypergraphs due to Henning and Yeo~\cite[Theorem~7]{HeYe21}.

\begin{theorem}[\cite{HeYe21}]\label{thm:HY}
Let $H$ be a $3$-uniform linear hypergraph that contains no subhypergraph isomorphic to $F_7$.
Then \(17\tau(H) \le 5 n_H + 3 m_H\).
\end{theorem}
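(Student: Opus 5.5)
Theorem~\ref{thm:HY} is quoted from Henning and Yeo~\cite[Theorem~7]{HeYe21}, and in the paper it is simply invoked. If I had to prove it myself, I would argue by minimal counterexample. Let $H$ be a $3$-uniform linear $F_7$-free hypergraph with $17\tau(H)>5n_H+3m_H$, chosen with $n_H+m_H$ minimal. The key point is that both hypotheses, linearity and $F_7$-freeness, pass to subhypergraphs obtained by deleting vertices together with their incident edges, or by deleting edges. So every local reduction stays inside the class.

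\textbf{Step 1: easy reductions.} I first establish basic structure of a minimal counterexample.
\begin{enumerate}
\item $H$ has no isolated vertices. Deleting one lowers $n_H$ by $1$ and leaves $\tau$ unchanged.
\item $H$ is connected. Both $\tau$ and the quantity $5n_H+3m_H$ are additive over components.
\item $\Delta(H)\le 3$. If $v$ has degree $d$, then $\tau(H)\le 1+\tau(H-v)$, and minimality gives
\[
17\tau(H)\le 17+5(n_H-1)+3(m_H-d),
\]
which is at most $5n_H+3m_H$ as soon as $d\ge 4$.
\end{enumerate}

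\textbf{Step 2: refined deletions at low-degree vertices.} When $\Delta\le 3$, deleting a single vertex only gains $5+3d\le 14<17$, so the deletions must be sharpened. Typical moves are:
\begin{enumerate}
\item Delete a vertex $v$ of degree $3$ and then also delete every vertex that becomes isolated.
\item Take a degree-$1$ vertex $u$ in an edge $e=\{u,x,y\}$, put $x$ (or $y$) into the transversal, and remove $u$ as well.
\item Identify or contract structures such as two degree-$2$ vertices lying in a common edge.
\end{enumerate}
Each move must lose at least $17$ units of $5n+3m$ per transversal vertex added. Linearity makes the neighbourhoods of distinct edges through $v$ disjoint, which is what lets these counts add up. From this I expect to show that a minimal counterexample is close to $3$-regular, with strong restrictions on degree-$2$ and degree-$1$ vertices.

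\textbf{Step 3 (main obstacle): the near-cubic case.} Here local deletions alone are insufficient. This is exactly where $F_7$-freeness must be used, since $F_7$ and similar small configurations are the obstructions that stop one from saving an extra vertex. My first attempt would be a discharging or weighting argument:
\begin{enumerate}
\item Assign weight $5$ to each vertex and $3$ to each edge.
\item Choose a vertex $v$ whose deletion, followed by cleaning up the degree-$1$ vertices created in the three edges through $v$, forces a second vertex of the transversal to cover two edges at once.
\item Without $F_7$, the six neighbours of $v$ and the edges among them can be arranged as in Figure~\ref{fig:F7}, which blocks this saving. With $F_7$ excluded, I expect a case analysis on how the edges meeting $N(v)$ overlap to always produce a deletion of, say, $2$ vertices that removes weight at least $34$.
\end{enumerate}

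\textbf{Step 4: small exceptions.} I would separately check small hypergraphs where the reductions fail, such as the Fano plane ($n=m=7$, $\tau=3$) and small linear cubic configurations, and verify the inequality for them directly. The bulk of the difficulty, and the part I would expect to require a lengthy case analysis as in~\cite{HeYe21}, is Step 3.
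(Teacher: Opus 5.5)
The paper gives no proof of this statement. It is quoted from Henning and Yeo \cite[Theorem~7]{HeYe21} and used only as a black box in the lower bound $c_*\ge 9/17$, so your opening sentence matches the paper's treatment exactly.

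The self-contained argument you sketch, however, has a genuine gap. Step~3 is the only place $F_7$-freeness would enter, and it is stated only as an expectation; worse, the mechanism you propose there cannot work.
\begin{itemize}
\item \emph{Two-vertex deletions fail in the cubic case.} Let $H$ be $3$-regular and linear, and put two vertices $x,y$ into the transversal and delete them. Every surviving vertex shares at most one edge with $x$ and at most one with $y$, so it keeps at least one of its three edges and never becomes isolated. The weight removed is therefore at most $2\cdot 5+6\cdot 3=28<34$, however the edges around $N(v)$ overlap.
\item \emph{Larger bounded deletions fail too.} Deleting a set $X$ of $k$ vertices from a cubic linear hypergraph removes at most $14k+5i$ units, where $i$ counts the vertices all three of whose edges are hit. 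So you would need $i\ge 3k/5$. But if the vertex--edge incidence graph has girth larger than $4k$, the bipartite ``shares an edge'' graph between $X$ and these $i$ vertices is a forest with at least $3i$ edges. That forces $i\le (k-1)/2$.
\item \emph{Such hypergraphs lie in the class.} Cubic linear hypergraphs of arbitrarily large incidence girth exist. They are automatically $F_7$-free, since $F_7$ contains a Berge triangle. So a minimal counterexample could be one of them, and no bounded local deletion reduces it.
\end{itemize}
Your ``cleaning up'' of degree-$1$ vertices does not rescue this, because their edges still have to be covered. Closing the argument needs a genuinely different device. One option is a stronger weighted inequality for hypergraphs with edges of size $2$ and $3$, so that a vertex can be removed by shrinking its edges instead of paying $17$ for it. Another is a global argument.

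A smaller point: your claim that every reduction stays inside the class holds only for deletions. The identification and contraction moves in Step~2 can create two edges meeting in two vertices, or a new copy of $F_7$. They would therefore need separate justification.
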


We are now ready to prove the improved lower bound for \(c_*\).

\begin{proof}[Proof of the lower bound in Theorem~\ref{thm:c_star_bounds}]
Let $A$ be a $(4,5)$-set of size $n$ and let $H=H(A)$. Since every \((4,5)\)-set is weak Sidon by Proposition~\ref{prop:45_implies_weak}, Lemma~\ref{lem:sidon-iff-apfree} gives \(h(A)=\alpha(H)\). By definition, \(H\) is \(3\)-uniform; by Lemmas~\ref{lem:linear} and \ref{lem:H0free}, it is linear and \(F_7\)-free. Hence Theorem~\ref{thm:HY} applies and gives
\[
17\tau(H)\le 5n_H+3m_H.
\]
Using Lemma~\ref{lem:mleqn}, we have \(m_H\le n_H-2\) (for \(|A|\ge 2\)), and therefore
\[
17\tau(H)\le 5n_H+3(n_H-2)\le 8n_H.
\]
Thus \(\tau(H)\le \frac{8}{17}n_H\). Since \(\alpha(H)+\tau(H)=n_H\) for every hypergraph, it follows that
\[
h(A)=\alpha(H)\ge n_H-\frac{8}{17}n_H=\frac{9}{17}n_H=\frac{9}{17}n.
\]
Therefore \(h(A)\ge \frac{9}{17}n\) for every \((4,5)\)-set \(A\) of size \(n\ge 2\), and taking the minimum over all such
\(A\) yields \(f(n)\ge \frac{9}{17}n\) for all \(n\ge 2\). For \(n=1\), we trivially have \(f(1)=1\ge \frac{9}{17}\).
Hence \(f(n)\ge \frac{9}{17}n\) for all \(n\ge 1\). Finally, using \(c_*=\inf_{n\ge 1}\frac{f(n)}{n}\), we conclude that \(c_*\ge \frac{9}{17}\).
\end{proof}

\bigskip

{\noindent \bf Acknowledgements.}
We are grateful to Thomas Bloom for founding and maintaining the \emph{Erd\H{o}s Problems} website~\cite{EP757}, and to Nat Sothanaphan for carefully reading an earlier version of this manuscript. The first author was supported by National Key Research and Development Program of China 2023YFA1010201, National Natural Science Foundation of China grant 12125106, and Innovation Program for Quantum Science and Technology 2021ZD0302902.

\medskip

{\noindent \bf Declaration of AI usage.}
An AI assistant was used in the search for a construction of the set \(A_{\mathrm{base}}\) in Lemma~\ref{lem:base-block}. 
This AI-assisted exploration first produced a \(12\)-point example, yielding the improved upper bound \(c_*\le 7/12\), and subsequently, after several rounds of refinement, the \(14\)-point set \(A_{\mathrm{base}}\) used in the paper, which yields the reported upper bound \(c_*\le 4/7\). 
The AI assistant also helped generate an initial draft of the code used in the exhaustive verification for Lemma~\ref{lem:base-block}. 
Except for the work related to Lemma~\ref{lem:base-block}, all proofs in this paper and their presentation were carried out by the human authors.

\end{document}